\definecolor{linkcolor}{rgb}{0,0.2,0.6}
\newlength{\ptsize}
\renewcommand{\fnum@figure}{Fig. \thefigure}
\newcommand{\cN}{\mathcal{N}}
\newcommand{\cT}{\mathcal{T}}
\newcommand{\ba}{\backslash}
\newtheorem{theorem}{Theorem}[section]
\newtheorem{lemma}[theorem]{Lemma}
\newtheorem{sublemma}{}[theorem]
\theoremstyle{definition}
\theoremstyle{remark}
\newcommand\blfootnote[1]{%
  \begingroup
  \renewcommand\thefootnote{}\footnote{#1}%
  \addtocounter{footnote}{-1}%
  \endgroup
}
\begin{document}

\title{A sharp lower bound for the number of phylogenetic trees displayed by a tree-child network \\
{\normalsize \em Dedicated to Andreas Dress in appreciation of his contributions to the phylogenetics community}}

\author[1$\ast$]{Charles Semple}
\author[2]{Kristina Wicke}

\affil[1]{School of Mathematics and Statistics, University of Canterbury, Christchurch, New Zealand, charles.semple@canterbury.ac.nz}
\affil[2]{Department of Mathematical Sciences, New Jersey Institute of Technology, Newark, NJ, USA, kristina.wicke@njit.edu}

\date{\today}

\maketitle

\begin{abstract}
A normal (phylogenetic) network with $k$ reticulations displays $2^k$ phylogenetic trees. In this paper, we establish an analogous result for tree-child (phylogenetic) networks with no underlying $3$-cycles. In particular, we show that a tree-child network with $k\ge 2$ reticulations and no underlying $3$-cycles displays at least $2^{\sfrac{k}{2}}$ phylogenetic trees if $k$ is even and at least $\frac{3}{2\sqrt{2}}2^{\sfrac{k}{2}}$ if $k$ is odd. Moreover, we show that these bounds are sharp and characterise the tree-child networks that attain these bounds.
\end{abstract}

\textit{Keywords:} phylogenetic tree, phylogenetic network, tree-child network, displayed tree \\

\blfootnote{$^\ast$Corresponding author}

\section{Introduction}
\label{sec:introduction}

Understanding the evolutionary history of a collection of present-day species is a central goal in biology, and rooted phylogenetic trees have traditionally been used for this purpose. However, evolution is not always strictly tree-like. Reticulate evolutionary events, such as hybridisation and lateral gene transfer, violate the assumptions underlying phylogenetic trees and instead require a more general model, (rooted) phylogenetic networks, to accurately represent evolutionary history.

Although species-level evolution can be non-tree-like, the evolution of individual genes is typically assumed to follow a tree-like pattern. As a result, a phylogenetic network is often viewed as an amalgamation of gene trees. This viewpoint leads to the notion of a rooted phylogenetic tree displayed (intuitively, embedded) by a phylogenetic network. Several algorithms have been implemented to compute a phylogenetic network that displays a given collection of rooted phylogenetic trees. These algorithms include the Autumn algorithm~\cite{huson2018}, TreeKnit~\cite{barratcharlaix2022}, ALTS~\cite{zhang2023}, FHyNCH~\cite{bernardini2024}, and PhyloFusion~\cite{zhang2024}. Relatedly, there has been substantial work on questions such as whether a phylogenetic network is (uniquely) determined by its displayed rooted phylogenetic trees~\cite{gambette2011,willson2011,linz2020}, whether a particular rooted phylogenetic tree is displayed by a phylogenetic network~\cite{kanj2008,iersel2010}, and whether the number of rooted phylogenetic trees displayed by a given phylogenetic network can be computed in polynomial time~\cite{linz2013}. It is the last of these questions that is the attention of this paper. In general, it is \#P-complete to count the number of rooted binary phylogenetic trees displayed by a rooted binary phylogenetic network~\cite{linz2013} and, despite some recent progress~\cite{basire2025}, it remains an open problem on whether this computational hardness extends to counting the number of rooted binary phylogenetic trees displayed by a tree-child network, a particular, but well studied, type of phylogenetic network. In this paper, we focus on obtaining a sharp lower bound for the number of rooted binary phylogenetic trees displayed by a tree-child network. We complete the introduction by stating the main result of the paper. Formal definitions are given in the next section.

Let $\cN$ be a rooted binary phylogenetic network on $X$, and suppose that $\cN$ has $k$~reticulations. It is well-known that if $\cN$ is normal, then $\cN$ displays exactly $2^k$ rooted binary phylogenetic $X$-trees \cite{iersel2010, willson2012}. (This is the maximum possible number of rooted binary phylogenetic trees displayed by~$\cN$.) However, if $\cN$ is tree-child and we allow $\cN$ to have underlying $3$-cycles, then $\cN$ could have many reticulations but still display only one rooted binary phylogenetic $X$-tree. What can we say if $\cN$ is tree-child and has no underlying $3$-cycles? The number of rooted binary phylogenetic trees displayed can still be strictly less than $2^k$. But how much less?  In this paper, we establish the following theorem, the main result of the paper. In the statement of the theorem, note that a rooted binary tree-child network with $n$ leaves has at most $n-1$ reticulations and an octopus is a particular type of tree-child network that we describe in the next section. Also, for a rooted binary phylogenetic network $\cN$, we let $T(\cN)$ denote the set of (distinct) rooted binary phylogenetic trees displayed by~$\cN$.

\begin{theorem}
Let $\cN$ be a rooted binary tree-child network with $n$ leaves, $0\le k\le n-1$ reticulations, and no underlying $3$-cycles. If $k=0$, then $|T(\cN)|=1$, while if $k=1$, then $|T(\cN)|=2$. Furthermore, if $k\ge 2$, then
\begin{enumerate}[{\rm (i)}]
\item $|T(\cN)|\ge 2^{\sfrac{k}{2}}$ if $k$ is even, and

\item $|T(\cN)| \ge \frac{3}{2\sqrt{2}}2^{\sfrac{k}{2}}$ if $k$ is odd.
\end{enumerate}
Moreover, for all $k\ge 2$, we have that $|T(\cN)|=2^{\sfrac{k}{2}}$ and $k$ is even (respectively, $|T(\cN)| = \frac{3}{2\sqrt{2}}2^{\sfrac{k}{2}}$ and $k$ is odd) if and only if $\cN$ is an octopus.
\label{main}
\end{theorem}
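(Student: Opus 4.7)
The plan is to proceed by induction on the number of reticulations $k$. The cases $k=0$ and $k=1$ are immediate: a tree-child network with no reticulations is itself a phylogenetic tree, and with a single reticulation the two choices of parent edge at that reticulation produce two distinct displayed trees. For $k\ge 2$, I would locate a lowest reticulation $r$ of $\cN$ (one whose unique child, which is a tree vertex or leaf by the tree-child condition, is as deep as possible in the underlying digraph) and reduce on its two incoming edges.

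Write $p_1, p_2$ for the parents of $r$ and let $\cN_i$ denote the network obtained from $\cN$ by deleting the edge $(p_i, r)$ and suppressing any resulting in-degree-$1$ out-degree-$1$ vertices, for $i\in\{1,2\}$. Each $\cN_i$ is tree-child with $k-1$ reticulations, and every tree displayed by $\cN$ is displayed by at least one $\cN_i$, so
\[
T(\cN) \;=\; T(\cN_1) \cup T(\cN_2).
\]
The aim is to gain a factor of $2$ in the count for every two removed reticulations. When the union is disjoint one has $|T(\cN)| = |T(\cN_1)| + |T(\cN_2)|$ and can apply the inductive hypothesis at level $k-1$ to both summands. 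When it overlaps, the overlap should itself correspond to the displayed trees of a sub-network $\cN'$ with $k-2$ reticulations obtained by resolving $r$ together with a suitable second reticulation, yielding an estimate of the form $|T(\cN)|\ge 2\,|T(\cN')|$. The no-underlying-$3$-cycle hypothesis is essential here: it rules out precisely those local reticulation configurations that would force two different lifts of a tree to coincide and thereby collapse the bound.

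For the equality characterisation, I would refine the induction to record not merely the inequality but also the configurations achieving it. Equality at level $k$ should force equality at level $k-2$ together with a very rigid local picture around $r$: the vertices $p_1, p_2$ and their neighbours must fit into the octopus-arm pattern. Unrolling the induction then recovers the octopus structure globally. The parity split between the even and odd bounds corresponds to the two smallest octopi, at $k=2$ (with $|T(\cN)|=2$) and $k=3$ (with $|T(\cN)|=3$), which serve as the base cases of the two parallel inductions.

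The principal obstacle is that deleting an edge $(p_i, r)$ can create a new underlying $3$-cycle in $\cN_i$ even when $\cN$ has none, so the inductive hypothesis does not apply to an arbitrary reduction. I therefore expect the heart of the proof to be a careful local analysis of the neighbourhood of a lowest reticulation, splitting into cases according to the positions of the parents of $p_1$ and $p_2$, and choosing the reduction (possibly a pair of reductions performed simultaneously) so that the no-$3$-cycle property is preserved — or else offsetting the appearance of a new $3$-cycle against an additional saving in the displayed-tree count coming from the same local configuration. A secondary subtlety is the equality analysis itself: excluding networks that look locally like an octopus but globally differ will likely require a separate structural lemma describing how octopi compose through the chosen reduction.
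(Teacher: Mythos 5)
Your overall frame (induction on $k$, reduction at a bottom-most reticulation, the observation that deleting a reticulation arc can create a $3$-cycle, and the $k=2$/$k=3$ octopi as the source of the parity split) matches the paper's, but the step you describe as the heart of the argument is exactly where the proposal has a genuine gap. You write $T(\cN)=T(\cN_1)\cup T(\cN_2)$ and propose that when the union overlaps you still get $|T(\cN)|\ge 2\,|T(\cN')|$ for a subnetwork $\cN'$ with $k-2$ reticulations. But a union only gets \emph{smaller} as the overlap grows: from $T(\cN)=T(\cN_1)\cup T(\cN_2)$ alone you can conclude only $|T(\cN)|\ge\max\bigl(|T(\cN_1)|,|T(\cN_2)|\bigr)$, which even in the best case (both reductions $3$-cycle-free, so the inductive hypothesis applies) gives $t(n,k-1)=t(n,k)/\sqrt{2}$ and falls short. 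To recover the bound you must exhibit \emph{additional distinct} displayed trees, and you give no mechanism for doing so. The paper's mechanism is to locate, via tree paths, specific leaves $a, b, m_u,\ldots$ below the relevant vertices and to show that different choices of reticulation arcs force non-isomorphic restrictions $\cT|\{a,b,m_u,\ldots\}$; this converts the union into a sum of two or three terms $|T(\cN\ba S)|$ for carefully chosen arc sets $S$ of size $1$, $2$, or $3$, matched against the identities $t(n,k)=2\cdot t(n,k-2)$ and, for odd $k$, $t(n,k)=t(n,k-1)+t(n,k-3)$ of Lemma~\ref{tnk}. Steps of size $3$ are unavoidable in the odd case, so ``a factor of $2$ for every two removed reticulations'' cannot by itself account for the $3$-tight caterpillar ladders.

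Two further ingredients are missing. First, the paper runs the induction on a \emph{minimiser} of $|T(\cN)|$, so that Lemma~\ref{octopus} supplies the matching upper bound $|T(\cN)|\le t(n,k)$; each branch of the structural analysis then ends in a contradiction with this upper bound unless the local picture is an octopus arm, which is how the inequality and the equality characterisation are proved simultaneously rather than by a separate ``refined induction''. Second, the engine of that structural analysis is Lemma~\ref{normal}: if any reticulation of the minimiser is normal, then $|T(\cN)|>t(n,k)$, so every reticulation must receive a shortcut, and it is this that pins down the caterpillar-ladder geometry around your vertex $r$. Finally, your base cases are understated: $k=0,1$ are indeed immediate, but $k=2$ and $k=3$ (Lemma~\ref{base}) require the non-essential-arc characterisation of Lemma~\ref{tight} and a substantial case analysis of their own; they cannot be dispatched simply by exhibiting the two smallest octopi.
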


The paper is organised as follows. In the next section, we give some necessary definitions that clarify the terminology in the statement of Theorem~\ref{main} and are used throughout the rest of the paper. Section~\ref{lemmas} establishes some preliminary lemmas, while Section~\ref{proof} consists of the proof of Theorem~\ref{main}.

\section{Preliminaries}
\label{sec:preliminaries}

Throughout the paper $X$ denotes a non-empty finite set.

\paragraph*{Phylogenetic networks.}
A {\em rooted binary phylogenetic network on $X$} is a rooted acyclic directed graph with no parallel arcs such that
\begin{enumerate}[(i)]
\item the (unique) root has in-degree zero and out-degree two,

\item the set of vertices of out-degree zero is $X$,

\item all other vertices have either in-degree one and out-degree two, or in-degree two and out-degree one.
\end{enumerate}
For technical reasons, if $|X|=1$, then we allow $\cN$ to consist of the single vertex in $X$. The set $X$ is call the {\em leaf set} of $\cN$. The vertices of in-degree one and out-degree two are {\em tree vertices}, while the vertices of in-degree two and out-degree one are {\em reticulations}. The arcs directed into a reticulation are called {\em reticulation arcs}; otherwise, an arc is a {\em tree arc}. If $(u, v)$ is a reticulation arc in $\cN$ and there is a directed path from $u$ to $v$ distinct from the path consisting of $(u, v)$, then $(u, v)$ is a {\em shortcut}. A reticulation $v$ {\em normal} if neither reticulation arc directed into $v$ is a shortcut. A {\em $2$-connected component} of $\cN$ is a maximal (underlying) subgraph of $\cN$ that is connected and cannot be disconnected by deleting exactly one of its vertices. We call a 2-connected component {\em trivial} if it consists of a single edge, and {\em non-trivial} otherwise. Furthermore, for brevity, we call an underlying $3$-cycle of $\cN$ a {\em $3$-cycle}. A {\em rooted binary phylogenetic $X$-tree $\cT$} is a rooted binary phylogenetic network on $X$ with no reticulations. Since all phylogenetic networks and phylogenetic trees in this paper are rooted and binary, we will refer to a rooted binary phylogenetic network and a rooted binary phylogenetic tree as a {\em phylogenetic network} and a {\em phylogenetic tree}, respectively.

A phylogenetic network $\cN$ on $X$ is {\em tree-child} if each non-leaf vertex is the parent of a tree vertex or a leaf. Equivalently, a phylogenetic network $\cN$ is tree-child precisely if no tree vertex is the parent of two reticulations and no reticulation is the parent of another reticulation~\cite{semple2015}. As an immediate consequence of the definition, if $u$ is a vertex of a tree-child network $\cN$, then there is a directed path from $u$ to a leaf $\ell$ of $\cN$ such that except for $\ell$ and possibly $u$, every vertex on the path is a tree vertex. We call such a path a {\em tree path (for $u$)}. As a result of this tree-path property, if $\cN$ is a tree-child network with $n$ leaves, then $\cN$ has at most $n-1$ reticulations, and this bound is sharp~\cite{cardona2009}. Also observe that if $C$ is a $3$-cycle of a tree-child network, then the arc set of $C$ consists of two reticulation arcs directed into the same reticulation, one of which is a shortcut, and a tree arc. A tree-child network is {\em normal} if it has no shortcuts. To illustrate, in Fig.~\ref{fig:display}(i), $\cN$ is a tree-child network, but it is not normal as the arc $(u,v)$ is a shortcut. As with all other figures in the paper, arcs are directed down the page. It is directly because of shortcuts that the number of phylogenetic trees displayed by a tree-child network with $k$ reticulations is not necessarily $2^k$.

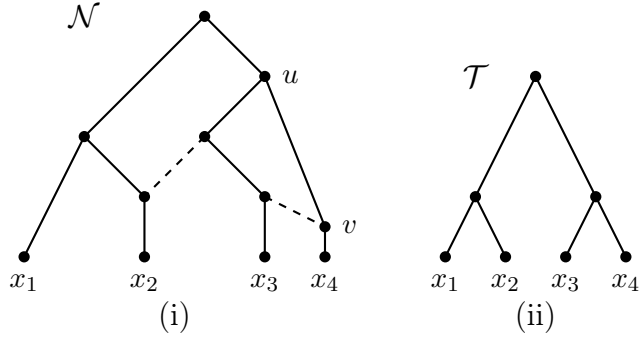
\begin{figure}[t!]
    \centering
    \begin{tikzpicture}[thick,scale=0.8]
    
            \node[fill=black,circle,inner sep=1.5pt, label=below: {$x_1$} ] at (0.5,1){};
            \node[fill=black,circle,inner sep=1.5pt, label=below: {$x_2$} ] at (2.5,1){};
            \node[fill=black,circle,inner sep=1.5pt, label=below: {$x_3$} ] at (4.5,1){};
            \node[fill=black,circle,inner sep=1.5pt, label=below: {$x_4$} ] at (5.5,1){};
            \node[fill=black,circle,inner sep=1.5pt]  at (3.5,5){};
            \node[fill=black,circle,inner sep=1.5pt] at (1.5,3){};
            \node[fill=black,circle,inner sep=1.5pt] at (2.5,2){};
            \node[fill=black,circle,inner sep=1.5pt] at (3.5,3){};
            \node[fill=black,circle,inner sep=1.5pt, label=right: {$u$}] at (4.5,4){};
            \node[fill=black,circle,inner sep=1.5pt] at (4.5,2){};
            \node[fill=black,circle,inner sep=1.5pt, label=right: {$v$}] at (5.5,1.5){};
            \draw(3.5,5)--(1.5,3);
            \draw(3.5,5)--(4.5,4);
            \draw(1.5,3)--(0.5,1);
            \draw(1.5,3)--(2.5,2);
            \draw[dashed](3.5,3)--(2.5,2);
            \draw(3.5,3)--(4.5,2);
            \draw[dashed](4.5,2)--(5.5,1.5);
            \draw(2.5,2)--(2.5,1);
            \draw(4.5,2)--(4.5,1);
            \draw(5.5,1.5)--(5.5,1);
            \draw(4.5,4)--(3.5,3);
            \draw(4.5,4)--(5.5,1.5);
            \node[align=left] at (1.5,5) {\large $\cN$};
             \node[align=left] at (3,0) {\large (i)};
                          
     		\node[fill=black,circle,inner sep=1.5pt, label=below: {$x_1$} ] at (7.5,1){};
            \node[fill=black,circle,inner sep=1.5pt, label=below: {$x_2$} ] at (8.5,1){};
            \node[fill=black,circle,inner sep=1.5pt, label=below: {$x_3$} ] at (9.5,1){};
            \node[fill=black,circle,inner sep=1.5pt, label=below: {$x_4$} ] at (10.5,1){};
            \node[fill=black,circle,inner sep=1.5pt]  at (9,4){};
     		\node[fill=black,circle,inner sep=1.5pt] at (8,2){};
     		\node[fill=black,circle,inner sep=1.5pt] at (10,2){};
     		\draw(9,4)--(7.5,1);
     		\draw(9,4)--(10.5,1);
     		\draw(8,2)--(8.5,1);
     		\draw(10,2)--(9.5,1);
     		\node[align=left] at (8,4) {\large $\cT$};
     		 \node[align=left] at (9,0) {\large (ii)};
       \end{tikzpicture}
    \caption{(i) A tree-child network $\cN$ on $X = \{x_1, x_2, x_3, x_4\}$ and (ii) a phylogenetic $X$-tree $\cT$ displayed  by $\cN$.}
    \label{fig:display}
\end{figure}

A lemma that we will frequently and freely use is the following \cite{doecker2021}.

\begin{lemma}
Let $\cN$ be a tree-child network with root $\rho$ and let $e=(u, v)$ be a reticulation arc of $\cN$. Then the phylogenetic network obtained from $\cN$ by deleting $e$ and either
\begin{enumerate}[{\rm (i)}]
\item suppressing the two resulting vertices of in-degree one and out-degree one if $u\neq \rho$, or

\item suppressing the resulting vertex of in-degree one and out-degree one, and deleting $u$ if $u=\rho$
\end{enumerate} 
is tree-child.
\label{deletion}
\end{lemma}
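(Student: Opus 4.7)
The plan is to exploit the strictly local nature of the operation: only a small neighbourhood of $e=(u,v)$ is affected, so the tree-child condition can be checked vertex-by-vertex. To set up notation, note that since $v$ is a reticulation, its parents are $u$ and some $v'$ (distinct, as $\cN$ has no parallel arcs) and its unique child is some $w'$. Tree-childness of $\cN$ forbids a reticulation from having a reticulation child, so neither $u$ nor $v'$ is a reticulation; each is therefore the root or a tree vertex. Write $u'$ for the parent of $u$ (in Case~(i)), $w$ for the other child of $u$, and $y$ for the other child of $v'$. The operation in Case~(i) replaces the path $u'\!\to\!u\!\to\!w$ by the arc $(u',w)$ and the path $v'\!\to\!v\!\to\!w'$ by the arc $(v',w')$; in Case~(ii) the first replacement becomes ``delete $u$ and promote $w$ to the root.'' Three consequences of tree-childness make the subsequent verification routine: (a)~the other child $w$ of $u$ is a tree vertex or leaf (since $v$ is a reticulation child of $u$), (b)~the other child $y$ of $v'$ is a tree vertex or leaf (since $v$ is a reticulation child of $v'$), and (c)~the unique child $w'$ of $v$ is a tree vertex or leaf (since $v$ is a reticulation).

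Next I would verify that the resulting network $\cN'$ is a tree-child phylogenetic network. Every vertex other than $u'$, $v'$, $w$, and $w'$ has the same out-neighbourhood in $\cN'$ as in $\cN$, so its tree-child condition is inherited. At $u'$, the child $u$ is replaced by $w$; since $w$ is a tree vertex or leaf by (a), $u'$ still has a non-reticulation child. At $v'$, the child $v$ is replaced by $w'$, while $y$ is unchanged and is a tree vertex or leaf by (b); tree-childness holds. At $w$ and $w'$ only the parent changes, so out-neighbourhoods, and hence the local condition, are identical. In Case~(ii), the analysis at $u'$ is replaced by noting that $w$ becomes the new root with unchanged children; here one must additionally argue that $w$ is not a leaf, for otherwise the only arcs out of $\rho$ would be $(\rho,v)$ and the dead end $(\rho,w)$, forcing $v'$ to lie on a directed path $v\!\to\!\cdots\!\to\!v'$, in contradiction with acyclicity. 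Finally, $\cN'$ has no parallel arcs: any coincidence of $(u',w)$ or $(v',w')$ with an existing arc of $\cN$ would force $w$ or $w'$ to have two parents and thus to be a reticulation, contradicting (a) or (c); and acyclicity is preserved because each new arc is a directed shortcut of a path already in $\cN$, so any cycle in $\cN'$ lifts to a cycle in $\cN$.

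The principal obstacle is not a deep combinatorial fact but the careful accounting of boundary configurations, chiefly $u'=v'$, the shortcut case in which $w=v'$, and $v'=\rho$, along with the root case $u=\rho$. In each, the argument above goes through verbatim once one verifies that no coincidental vertex identification creates a parallel arc or alters the tree-vertex/leaf classification of the ``third-party'' vertices $w$, $y$, and $w'$, which is precisely what observations (a), (b), and (c) guarantee.
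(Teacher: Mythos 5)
The paper does not prove this lemma at all---it is quoted from \cite{doecker2021} and used as a black box---so there is no in-paper argument to compare against. Judged on its own, your local verification is correct and complete: the three observations (a)--(c) are exactly the consequences of tree-childness needed, since the only vertices whose child-sets change are $u'$ and $v'$, and each retains a non-reticulate child ($w$, respectively $y$ or $w'$). You also correctly dispatch the points where such an argument usually goes wrong: the absence of parallel arcs after suppression (which would force $w$ or $w'$ to be a reticulation, contradicting (a) or (c)), preservation of acyclicity, the degenerate identifications $u'=v'$ and $w=v'$, and, in the root case, the fact that $w$ cannot be a leaf because otherwise $v'$ would be reachable only through $v$, contradicting acyclicity. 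I would only add the explicit remark that no surviving vertex changes its classification as tree vertex, reticulation, or leaf (except $w$ possibly becoming the root), which is what licenses the sentence that all vertices outside $\{u',v'\}$ inherit the tree-child condition; you use this implicitly and it is easy to check.
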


\noindent To ease reading, for a tree child network $\cN$ and reticulation arc $e$ of $\cN$, we denote by $\cN\ba e$ the operation of deleting $e$ and appropriately applying either (i) or (ii) of Lemma~\ref{deletion}. We next describe two particular types of tree-child networks that are central to this paper.

\paragraph*{Tight caterpillar ladders and octopuses.}
Let $\cN$ be a tree-child network with vertex set $\{\ell_0, \ell_1, \ell_2, \ell_3\}\cup \big\{u_i, u'_i, v_i: i\in \{1, 2, 3\}\big\}$. We call $\cN$ a {\em $3$-tight caterpillar ladder} if the arc set of $\cN$ is
$$\{(u'_3, u'_2), (u'_2, u_3), (u_3, u'_1), (u'_1, u_2), (u_2, u_1), (u_1, \ell_0)\}\cup \big\{(u'_i, v_i), (u_i, v_i), (v_i, \ell_i): i\in \{1, 2, 3\}\big\}.$$
Note that $\{\ell_0, \ell_1, \ell_2, \ell_3\}$ is the leaf set of $\cN$. The reticulation arcs $(u_1, v_1)$, $(u_2, v_2)$, $(u'_1, v_1)$, $(u_3, v_3)$, $(u'_2, v_2)$, and $(u'_3, v_3)$ are the {\em rungs} of the $3$-tight caterpillar ladder. Under this ordering, we refer to these rungs as the {\em $i$-th rung} so that, for example, $(u_1, v_1)$ and $(u'_3, v_3)$ are the {\em first} and {\em last} rungs of $\cN$, respectively. Furthermore, a tree-child network is a {\em $2$-tight caterpillar ladder} if it can be obtained from a $3$-tight caterpillar by deleting, in this instance, $u'_3$, $v_3$, and $\ell_3$, and suppressing the resulting vertex of in-degree one and out-degree one. Here, for example, the {\em first} and {\em last} rungs are the arcs $(u_1, v_1)$ and $(u'_2, v_2)$, respectively. For illustration, a $2$-tight and a $3$-tight caterpillar ladder are depicted in Fig.~\ref{fig:ladder}.

\begin{figure}[t!]
    \centering
    \begin{tikzpicture}[thick,scale=1]
    \node[fill=black,circle,inner sep=1.5pt,label={[right, yshift=2]{$u_2'$}}]  at (1,5){};
    \node[fill=black,circle,inner sep=1.5pt,label={[right, yshift=2]{$u_1'$}}]  at (1,3.5){};
    \node[fill=black,circle,inner sep=1.5pt,label={[right, yshift=-6]{$u_2$}}]  at (1,3){};
    \node[fill=black,circle,inner sep=1.5pt,label={[right, yshift=2]{$u_1$}}]  at (1,1.5){};
    \node[fill=black,circle,inner sep=1.5pt,label={[left, yshift=0]{$v_2$}}]  at (0,2.5){};
    \node[fill=black,circle,inner sep=1.5pt,label={[right, yshift=0]{$v_1$}}]  at (2,1){};
    \node[fill=black,circle,inner sep=1.5pt, label=below: {$\ell_0$} ] at (1,1){};
    \node[fill=black,circle,inner sep=1.5pt, label=below: {$\ell_1$} ] at (2,0.5){};
    \node[fill=black,circle,inner sep=1.5pt, label=below: {$\ell_2$} ] at (0,2){};

	\draw(1,5)--(1,1);
    \draw(1,3)--(0,2.5);
    \draw(0,2.5)--(0,2);
    \draw(1,1.5)--(2,1);
    \draw(2,1)--(2,0.5);
    \draw (1,5) to[bend left=-30] (0,2.5);
    \draw (1,3.5) to[bend left=30] (2,1);

    \node[fill=black,circle,inner sep=1.5pt,label={[right, yshift=2]{$u_3'$}}]  at (5,6.5){};
    \node[fill=black,circle,inner sep=1.5pt,label={[right, yshift=2]{$u_2'$}}]  at (5,5){};
    \node[fill=black,circle,inner sep=1.5pt,label={[right, yshift=2]{$u_3$}}]  at (5,4.5){};
    \node[fill=black,circle,inner sep=1.5pt,label={[right, yshift=2]{$u_1'$}}]  at (5,3.5){};
    \node[fill=black,circle,inner sep=1.5pt,label={[right, yshift=-6]{$u_2$}}]  at (5,3){};
    \node[fill=black,circle,inner sep=1.5pt,label={[right, yshift=2]{$u_1$}}]  at (5,1.5){};
    \node[fill=black,circle,inner sep=1.5pt,label={[right, yshift=0]{$v_3$}}]  at (6,4){};
    \node[fill=black,circle,inner sep=1.5pt,label={[left, yshift=0]{$v_2$}}]  at (4,2.5){};
    \node[fill=black,circle,inner sep=1.5pt,label={[right, yshift=0]{$v_1$}}]  at (6,1){};
    \node[fill=black,circle,inner sep=1.5pt, label=below: {$\ell_0$} ] at (5,1){};
    \node[fill=black,circle,inner sep=1.5pt, label=below: {$\ell_1$} ] at (6,0.5){};
    \node[fill=black,circle,inner sep=1.5pt, label=below: {$\ell_2$} ] at (4,2){};
    \node[fill=black,circle,inner sep=1.5pt, label=below: {$\ell_3$} ] at (6,3.5){};
     
    \draw(5,6.5)--(5,1);
    \draw(5,4.5)--(6,4);
    \draw(6,4)--(6,3.5);
    \draw(5,3)--(4,2.5);
    \draw(4,2.5)--(4,2);
    \draw(5,1.5)--(6,1);
    \draw(6,1)--(6,0.5);
    \draw (5,6.5) to[bend left=30] (6,4);
    \draw (5,5) to[bend left=-30] (4,2.5);
    \draw (5,3.5) to[bend left=30] (6,1);
    \end{tikzpicture}
    
     \caption{A $2$-tight (left) and a $3$-tight (right) caterpillar ladder.}
    \label{fig:ladder}
\end{figure}
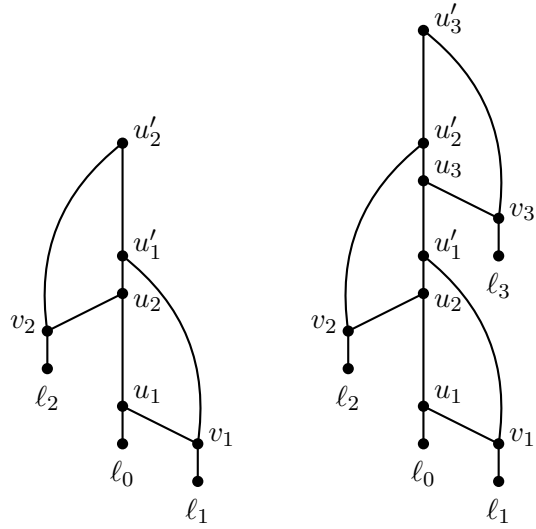

Let $k\in \{2, 3\}$. The {\em core} of a $k$-tight caterpillar ladder consists of its non-pendant arcs. Furthermore, let $\cN$ be a $k$-tight caterpillar ladder and let $\cN'$ be a tree-child network. We say that $\cN$ is a {\em $k$-tight caterpillar ladder of $\cN'$} if, up to isomorphism, the core of $\cN$ can be obtained from $\cN'$ by deleting vertices and arcs.

Now let $\cN$ be a tree-child network on $X$ with $n$ leaves and $k$ reticulations, where $k\neq 1$. We call $\cN$ an {\em octopus} if either $k$ is even and every non-trivial $2$-connected component of $\cN$ is the core of a $2$-tight caterpillar ladder, or $k$ is odd, exactly one non-trivial $2$-connected component of $\cN$ is the core of a $3$-tight caterpillar ladder, and every other non-trivial $2$-connected component of $\cN$ is the core of a $2$-tight caterpillar ladder. To illustrate, in Fig.~\ref{fig:octopus}, $\cN$ and $\cN'$ are both octopuses with $10$ leaves and $7$ reticulations.

\begin{figure}[t!]
    \centering
    \begin{tikzpicture}[thick,scale=0.8]
    
    \node[fill=black,circle,inner sep=1.5pt, label=below: {$x_1$} ] at (0.5,1){};
    \node[fill=black,circle,inner sep=1.5pt, label=below: {$x_2$} ] at (1.5,1.5){};
    \node[fill=black,circle,inner sep=1.5pt, label=below: {$x_3$} ] at (2,2){};
    \node[fill=black,circle,inner sep=1.5pt, label=below: {$x_4$} ] at (3,2){};
    \node[fill=black,circle,inner sep=1.5pt, label=below: {$x_5$} ] at (3,5.5){};
    \node[fill=black,circle,inner sep=1.5pt, label=below: {$x_6$} ] at (4,5.5){};
    \node[fill=black,circle,inner sep=1.5pt, label=below: {$x_7$} ] at (4,2){};
    \node[fill=black,circle,inner sep=1.5pt, label=below: {$x_8$} ] at (5,1){};
    \node[fill=black,circle,inner sep=1.5pt, label=below: {$x_9$} ] at (6,0.5){};
    \node[fill=black,circle,inner sep=1.5pt, label=below: {$x_{10}$} ] at (6,3.5){};
 	\node[fill=black,circle,inner sep=1.5pt]  at (0.5,1.5){};
 	\node[fill=black,circle,inner sep=1.5pt]  at (1.5,2){};
 	\node[fill=black,circle,inner sep=1.5pt]  at (2.5,2.5){};
 	\node[fill=black,circle,inner sep=1.5pt]  at (2.5,3){};
 	\node[fill=black,circle,inner sep=1.5pt]  at (1.5,3.5){};
 	\node[fill=black,circle,inner sep=1.5pt]  at (1.5,4){};
 	\node[fill=black,circle,inner sep=1.5pt]  at (1.5,5.5){};
 	\node[fill=black,circle,inner sep=1.5pt]  at (3.5,6){};
 	\node[fill=black,circle,inner sep=1.5pt]  at (2.5,6.5){};
 	\node[fill=black,circle,inner sep=1.5pt]  at (3.5,7){};
 	\node[fill=black,circle,inner sep=1.5pt]  at (3.5,8.5){};
 	\node[fill=black,circle,inner sep=1.5pt]  at (3.5,9){};
 	\node[fill=black,circle,inner sep=1.5pt]  at (3.5,10.5){};
 	\node[fill=black,circle,inner sep=1.5pt]  at (4.5,8){};
 	\node[fill=black,circle,inner sep=1.5pt]  at (5,6.5){};
 	\node[fill=black,circle,inner sep=1.5pt]  at (5,5){};
 	\node[fill=black,circle,inner sep=1.5pt]  at (5,4.5){};
  	\node[fill=black,circle,inner sep=1.5pt]  at (5,3.5){};
  	\node[fill=black,circle,inner sep=1.5pt]  at (5,3){};
   	\node[fill=black,circle,inner sep=1.5pt]  at (5,1.5){};
    \node[fill=black,circle,inner sep=1.5pt]  at (5,1){};
  	\node[fill=black,circle,inner sep=1.5pt]  at (6,4){};
   	\node[fill=black,circle,inner sep=1.5pt]  at (4,2.5){};
    \node[fill=black,circle,inner sep=1.5pt]  at (6,1){};
    \node[fill=black,circle,inner sep=1.5pt]  at (3.5,9){};
    \draw(3.5,10.5)--(3.5,6);
    \draw(3.5,6)--(3,5.5);
    \draw(3.5,6)--(4,5.5);
    \draw(3.5,8.5)--(4.5,8);
    \draw(3.5,7)--(2.5,6.5);
    \draw(2.5,6.5)--(1.5,5.5);
    \draw(4.5,8)--(5,6.5);
    \draw (3.5,10.5) to[bend left=30] (4.5,8);
    \draw (3.5,9) to[bend left=-30] (2.5,6.5);
    \draw(1.5,5.5)--(1.5,1.5);
    \draw(1.5,3.5)--(2.5,3);
    \draw(1.5,2)--(0.5,1.5);
    \draw(0.5,1.5)--(0.5,1);
    \draw(1.5,2)--(1.5,1.5);
    \draw(2.5,3)--(2.5,2.5);
    \draw(2.5,2.5)--(2,2);
    \draw(2.5,2.5)--(3,2);
    \draw (1.5,5.5) to[bend left=30] (2.5,3);
    \draw (1.5,4) to[bend left=-30] (0.5,1.5);
    \draw(5,6.5)--(5,1);
    \draw(5,4.5)--(6,4);
    \draw(6,4)--(6,3.5);
    \draw(5,3)--(4,2.5);
    \draw(4,2.5)--(4,2);
    \draw(5,1.5)--(6,1);
    \draw(6,1)--(6,0.5);
    \draw (5,6.5) to[bend left=30] (6,4);
    \draw (5,5) to[bend left=-30] (4,2.5);
    \draw (5,3.5) to[bend left=30] (6,1);
    \node[align=left] at (2.5,10.5) {\Large $\cN$};
        
    \node[fill=black,circle,inner sep=1.5pt, label=below: {$x_1$} ] at (8,1){};
    \node[fill=black,circle,inner sep=1.5pt, label=below: {$x_2$} ] at (9,1.5){};
    \node[fill=black,circle,inner sep=1.5pt, label=below: {$x_3$} ] at (10,2.5){};
    \node[fill=black,circle,inner sep=1.5pt, label=below: {$x_4$} ] at (10.5,1){};
    \node[fill=black,circle,inner sep=1.5pt, label=below: {$x_5$} ] at (11.5,1.5){};
    \node[fill=black,circle,inner sep=1.5pt, label=below: {$x_6$} ] at (12.5,2.5){};
    \node[fill=black,circle,inner sep=1.5pt, label=below: {$x_7$} ] at (13.5,3){};
    \node[fill=black,circle,inner sep=1.5pt, label=below: {$x_8$} ] at (14.5,2){};
    \node[fill=black,circle,inner sep=1.5pt, label=below: {$x_9$} ] at (15.5,1.5){};
    \node[fill=black,circle,inner sep=1.5pt, label=below: {$x_{10}$} ] at (15.5,4.5){};
    \node[fill=black,circle,inner sep=1.5pt]  at (8,1.5){};
    \node[fill=black,circle,inner sep=1.5pt]  at (9,2){};
    \node[fill=black,circle,inner sep=1.5pt]  at (9,3.5){};
    \node[fill=black,circle,inner sep=1.5pt]  at (10,3){};
    \node[fill=black,circle,inner sep=1.5pt]  at (9,4){};
    \node[fill=black,circle,inner sep=1.5pt]  at (9,5.5){};
    \node[fill=black,circle,inner sep=1.5pt]  at (10.5,1.5){};
    \node[fill=black,circle,inner sep=1.5pt]  at (11.5,1.5){};
    \node[fill=black,circle,inner sep=1.5pt]  at (11.5,2){};
    \node[fill=black,circle,inner sep=1.5pt]  at (12.5,3){};
    \node[fill=black,circle,inner sep=1.5pt]  at (11.5,3.5){};
    \node[fill=black,circle,inner sep=1.5pt]  at (11.5,4){};
    \node[fill=black,circle,inner sep=1.5pt]  at (11.5,5.5){};
    \node[fill=black,circle,inner sep=1.5pt]  at (10.25,7.5){};
    \node[fill=black,circle,inner sep=1.5pt]  at (12.5,10.5){};
    \node[fill=black,circle,inner sep=1.5pt]  at (14.5,7.5){};
    \node[fill=black,circle,inner sep=1.5pt]  at (14.5,6){};
    \node[fill=black,circle,inner sep=1.5pt]  at (14.5,5.5){};
    \node[fill=black,circle,inner sep=1.5pt]  at (15.5,5){};
    \node[fill=black,circle,inner sep=1.5pt]  at (14.5,4.5){};
    \node[fill=black,circle,inner sep=1.5pt]  at (14.5,4){};
    \node[fill=black,circle,inner sep=1.5pt]  at (13.5,3.5){};
    \node[fill=black,circle,inner sep=1.5pt]  at (14.5,2.5){};
    \node[fill=black,circle,inner sep=1.5pt]  at (15.5,2){};
    
    \draw(12.5,10.5)--(10.25,7.5);
    \draw(12.5,10.5)--(14.5,7.5);
    \draw(10.25,7.5)--(9,5.5);
    \draw(10.25,7.5)--(11.5,5.5);
    
    \draw(9,5.5)--(9,1.5);
    \draw(9,3.5)--(10,3);
    \draw(10,3)--(10,2.5);
    \draw(9,2)--(8,1.5);
    \draw(8,1.5)--(8,1);
    \draw (9,5.5) to[bend left=30] (10,3);
    \draw (9,4) to[bend left=-30] (8,1.5);
    
    \draw(11.5,5.5)--(11.5,1.5);
    \draw(11.5,3.5)--(12.5,3);
    \draw(11.5,2)--(10.5,1.5);
    \draw(10.5,1.5)--(10.5,1);
    \draw(12.5,3)--(12.5,2.5);
    \draw (11.5,5.5) to[bend left=30] (12.5,3);
    \draw (11.5,4) to[bend left=-30] (10.5,1.5);
    
    \draw(14.5,7.5)--(14.5,2);
    \draw (14.5,7.5) to[bend left=30] (15.5,5);
    \draw (14.5,6) to[bend left=-30] (13.5,3.5);
    \draw (14.5,4.5) to[bend left=30] (15.5,2);
    \draw(14.5,5.5)--(15.5,5);
    \draw(15.5,5)--(15.5,4.5);
    \draw(14.5,4)--(13.5,3.5);
    \draw(13.5,3.5)--(13.5,3);
    \draw(14.5,2.5)--(15.5,2);
    \draw(15.5,2)--(15.5,1.5);
    
     \node[align=left] at (10.25,10.5) {\Large $\cN'$};
   \end{tikzpicture}
   
    \caption{Two octopuses $\cN$ and $\cN'$ with $10$ leaves and $7$ reticulations.}
    \label{fig:octopus}
\end{figure}
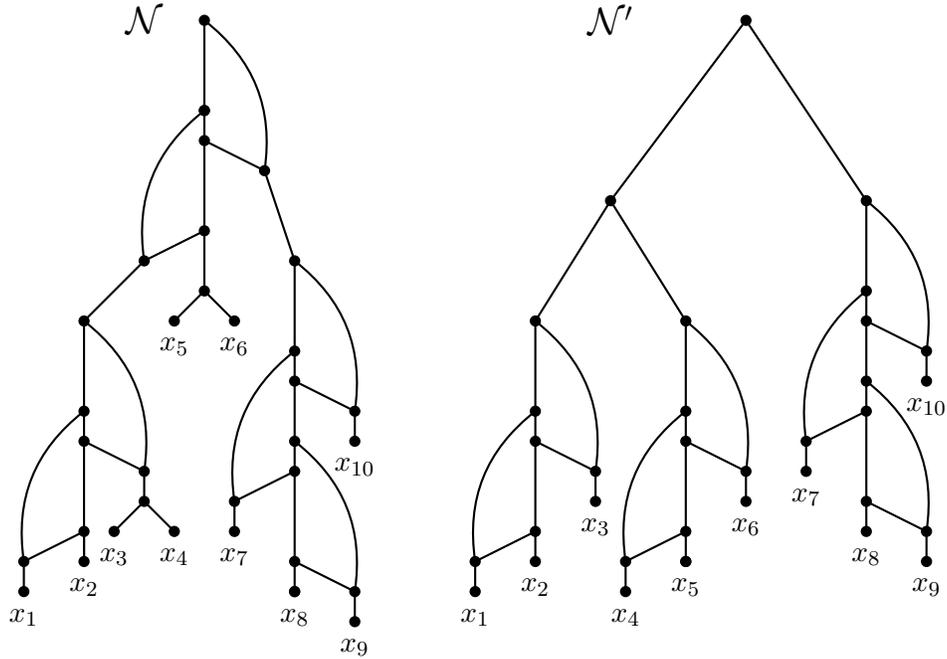

\paragraph*{Displaying.}
Let $\cN$ be a phylogenetic network on $X$ and let $\cT$ be a phylogenetic $X$-tree. We say that $\cN$ {\em displays} $\cT$ if a subdivision of $\cT$ can be obtained from $\cN$ by deleting vertices and arcs. Such a subdivision is an {\em embedding} of $\cT$ in $\cN$. If $\cN$ is a tree-child network and $\cT$ is a phylogenetic tree displayed by $\cN$, then every embedding of $\cT$ contains all of the tree arcs of $\cN$ and, for each reticulation $v$, exactly one reticulation arc of $\cN$ directed into $v$. Conversely, if $F$ is a subset of the arcs of $\cN$ that consists of all tree arcs and, for each reticulation $v$, exactly one reticulation arc directed into $v$, then $F$ is an embedding of a phylogenetic tree displayed by $\cN$ \cite{semple2015}. Thus to describe an embedding of $\cT$ in $\cN$ it suffices to specify the reticulation arcs of $\cN$ in the embedding. Such arcs are {\em used} by $\cT$. Also, as a reminder to the reader, we use $T(\cN)$ to denote the set of phylogenetic $X$-trees displayed by $\cN$. To illustrate the notion of display, in Fig.~\ref{fig:display}, $\cN$ displays $\cT$, where an embedding of $\cT$ in $\cN$ is shown as solid arcs. Note that there is one other distinct embedding of $\cT$ in $\cN$. 

Now let $\cN$ be a phylogenetic network on $X$. An arc $e$ of $\cN$ is {\em non-essential} if, for every phylogenetic $X$-tree $\cT$ in $T(\cN)$, there is an embedding of $\cT$ in $\cN$ that avoids $e$. The next lemma is a special case of a more general result established in \cite{linz2022}.

\begin{lemma}
Let $\cN$ be a tree-child network with $n$ leaves and $k$ reticulations, where $k\in \{2, 3\}$, and let $e$ be an arc of $\cN$. Then $e$ is non-essential if and only if $e$ is either the first or last rung of a $2$- or $3$-tight caterpillar ladder of $\cN$.
\label{tight}
\end{lemma}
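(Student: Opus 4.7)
My plan is to prove Lemma~\ref{tight} by direct structural analysis, exploiting that $k\in\{2,3\}$ keeps the number of configurations small.

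For the forward direction, I first note that any tree arc of $\cN$ is essential, since by the characterization of embeddings in a tree-child network (recalled in the paragraph preceding the lemma) every embedding contains every tree arc. So it suffices to show the first and last rungs of any caterpillar ladder $\cL$ of $\cN$ are non-essential. Let $e=(u_1,v_1)$ be the first rung of $\cL$, let $\cT\in T(\cN)$, and let $F$ be an embedding of $\cT$ in $\cN$ using $e$. I would construct an alternate embedding $F'$ of $\cT$ avoiding $e$ by modifying only the reticulation-arc choices at $v_1,\dots,v_k$ of $\cL$: set the choice at $v_1$ to $(u'_1,v_1)$, and, whenever $F$ uses the lower incoming arc $(u_i,v_i)$ at some $v_i$, cascade by also switching the choice at $v_{i+1}$ to its upper incoming arc, and so on. A direct check---using that the spine of $\cL$ is a tree-path, so the only effect of switching from a lower to an upper rung at $v_i$ is to shift $v_i$'s subtree past the rungs of adjacent reticulations---confirms that $F'$ displays the same tree as $F$. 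The last-rung case is symmetric under reversing the orientation of the spine.

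For the reverse direction, I would argue by enumeration of reticulation configurations. Since tree arcs are essential, $e=(u,v)$ must be a reticulation arc; let $v'$ denote the other parent of $v$. Non-essentiality of $e$ means that for every $\cT\in T(\cN)$ that has an embedding $F$ using $e$, there is another embedding $F'$ of $\cT$ using $(v',v)$ instead. Comparing $F$ and $F'$ for such a $\cT$ shows that the symmetric difference $F\triangle F'$ forms an alternating pattern of reticulation arcs involving $v$ and one or two other reticulations---otherwise, the cluster structure of $\cT$ would differ between $F$ and $F'$. I would then enumerate, using tree-child-ness, the tree-path property, and Lemma~\ref{deletion}, the possible positions of these additional reticulations relative to $v$, and check that in each configuration either a displayed tree whose only embedding uses $e$ exists (contradicting non-essentiality), or $\cN$ contains a caterpillar ladder with $e$ as its first or last rung.

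The main obstacle is the reverse direction when $k=3$, where the alternating cascade can stretch through two reticulations beyond $v$, yielding a larger case analysis. An efficient alternative that I would fall back on is to invoke the general characterization of non-essential arcs in arbitrary tree-child networks from~\cite{linz2022}, which specializes directly to Lemma~\ref{tight}.
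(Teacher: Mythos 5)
The paper does not prove Lemma~\ref{tight} at all: it is imported verbatim as ``a special case of a more general result established in \cite{linz2022}.'' Your fallback option --- specialising the general characterisation of non-essential arcs from \cite{linz2022} --- is therefore exactly what the paper does, and on that route your proposal is fine.

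Your primary route, a self-contained structural proof, is genuinely different and would be a nice addition, but as written it is only a sketch with real gaps. The forward direction is essentially sound: since $k\in\{2,3\}$, all reticulations of $\cN$ lie in the ladder, every embedding contains all tree arcs, and the at most $2^3$ rung-choices can be enumerated directly to verify that any embedding using the first (or last) rung yields the same displayed tree as one avoiding it; your cascading rule is one way to organise this, though you should state it carefully (e.g.\ in the $2$-tight case the embedding using $(u_1,v_1)$ and $(u'_2,v_2)$ is already ``upper'' at $v_2$, so the cascade must terminate rather than toggle). The reverse direction is where the substance is missing: the assertion that the symmetric difference $F\triangle F'$ of two embeddings of the same tree ``forms an alternating pattern of reticulation arcs, otherwise the cluster structure would differ'' is exactly the claim that needs proof, and the subsequent enumeration of the positions of the other one or two reticulations --- which must force the tight-ladder configuration, including the spine arcs being genuine arcs of $\cN$ rather than longer paths --- is announced but not carried out. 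You correctly identify the $k=3$ case as the obstacle; until that enumeration is done, the direct argument is not a proof. Given that you explicitly offer the citation to \cite{linz2022} as the alternative, the proposal is acceptable, but you should either commit to that citation (as the authors do) or actually execute the case analysis.
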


\paragraph*{Phylogenetic trees.}
Let $\cT$ be a phylogenetic $X$-tree, and let $X'$ be a subset of $X$. The minimal subtree of $\cT$ connecting the elements in $X'$ is denoted by $\cT(X')$. Furthermore, the {\em restriction of $\cT$ to $X'$} is the phylogenetic $X'$-tree obtained from $\cT(X')$ by suppressing vertices of in-degree one and out-degree one. A subtree of $\cT$ is {\em pendant} if it can be obtained by deleting an edge of $\cT$, in which case the leaf set of this pendant subtree is a {\em cluster} of $\cT$. Furthermore, two phylogenetic $X$-trees $\cT_1$ and $\cT_2$ are {\em isomorphic} if there is a map $\varphi: V(\cT_1)\rightarrow V(\cT_2)$ such that, for all $x\in X$, we have $\varphi(x)=x$ and, if $(u, v)$ is an arc of $\cT_1$, then $(\varphi(u), \varphi(v))$ is an arc of $\cT_2$.

A {\em caterpillar} is a phylogenetic tree whose leaf set can be ordered $x_1, x_2, \ldots, x_n$ so that $x_1$ and $x_2$ have the same parent and, for all $i\in \{3, 4, \ldots, n\}$, the parent of $x_{i-1}$ is a child of the parent of $x_i$. We denote such a caterpillar by $(x_1, x_2, \ldots, x_n)$. A {\em double caterpillar} is a phylogenetic tree such that its maximal pendant subtrees are both caterpillars. If $(x_1, x_2, \ldots, x_i)$ and $(y_1, y_2, \ldots, y_j)$ are two such caterpillars, then we denote the double caterpillar by $\big\{(x_1, x_2, \ldots, x_i), (y_1, y_2, \ldots, y_j)\big\}$.

\section{Some lemmas}
\label{lemmas}

In this section we establish some lemmas that will be used in the proof of Theorem~\ref{main}. For the first lemma, recall that a tree-child network with $n$ leaves has at most $n-1$ reticulations. Let $n\geq 1$ and $0 \leq k \leq n-1$ be two non-negative integers. For all $n$ and $k$, set
$$t(n, k) =
\begin{cases}
1, & \mbox{if $k=0$;} \\
2, & \mbox{if $k=1$;} \\
2^{\sfrac{k}{2}}, & \mbox{if $k\geq 2$ and $k$ is even;} \\
\frac{3}{2\sqrt{2}}2^{\sfrac{k}{2}}, & \mbox{if $k\geq 3$ and $k$ is odd.}
\end{cases}$$
Observe that, for all $n$ and $k$, the value $t(n, k)$ is the bound given in the statement of Theorem~\ref{main}. The next lemma establishes some basic properties of the numbers $t(n, k)$. These properties are repeatedly used in the proof of Theorem~\ref{main}.

\begin{lemma}
The following identities hold:
\begin{enumerate}[{\rm (i)}]
\item For all $n\ge 2$, we have $t(n,1) = 2 \cdot t(n,0)$ and, for all $n\ge 3$, we have $t(n,2) = 2 \cdot t(n,0)$.

\item For all $n\ge 4$, we have $t(n,3) = t(n,2) + t(n,0) < 2 \cdot t(n,1) = t(n, 2) + t(n, 1)$.

\item For all $n\ge 4$ and $2\le k\le n-2$, we have $t(n, k) < t(n, k+1)$.

\item For all $n\ge 4$ and $3\le k\le n-1$, we have $t(n,k) < 4 \cdot t(n,k-3)$. 

\item For all $n\ge 5$ and $4\le k\le n-1$,
$$t(n, k) = 2\cdot t(n, k-2) < t(n, k-1) + t(n, k-2).$$

\item For all $n\ge 4$ and $3\le k\le n-1$, and $k$ is odd,
$$t(n, k) = t(n, k-1) + t(n, k-3)$$
while, for all $n\ge 5$ and $4\le k\le n-1$, and $k$ is even,
$$t(n, k) < t(n, k-1) + t(n, k-3).$$
\end{enumerate}
\label{tnk}
\end{lemma}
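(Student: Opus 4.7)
The plan is to prove the lemma by direct computation from the closed-form definition of $t(n,k)$. Observe first that once $k$ is fixed (and $0\le k\le n-1$), the value of $t(n,k)$ does not depend on $n$; the role of $n$ in each clause is only to guarantee that the stated $k$ is in the admissible range. It is convenient to rewrite the odd-case formula as
\[
t(n,k) \;=\; \tfrac{3}{2\sqrt{2}}\,2^{k/2} \;=\; 3\cdot 2^{(k-3)/2}\qquad\text{for odd } k\ge 3,
\]
so that $t(n,k)$ becomes in each case an elementary expression in $2^{\lfloor k/2\rfloor}$. With this normalisation, each of (i)--(vi) reduces to a short algebraic check.

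For (i) and (ii), substitute the values $t(n,0)=1$, $t(n,1)=2$, $t(n,2)=2$, and $t(n,3)=3$ directly. For (iii), split by the parity of $k$: if $k\ge 2$ is even, then $t(n,k+1)/t(n,k)=\bigl(3\cdot 2^{(k-2)/2}\bigr)/2^{k/2} = 3/2$, while if $k\ge 3$ is odd, then $t(n,k+1)/t(n,k) = 2^{(k+1)/2}/(3\cdot 2^{(k-3)/2}) = 4/3$; both ratios exceed $1$. For (iv), an analogous split (with the $k=3$ case handled separately as $t(n,3)=3<4=4\,t(n,0)$) shows that $4\,t(n,k-3)/t(n,k)$ equals $3/2$ when $k$ is even and $4/3$ when $k\ge 5$ is odd. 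For (v), the equality $t(n,k)=2\,t(n,k-2)$ is immediate from the closed form (both $k$ and $k-2$ have the same parity, so the same formula governs them and the ratio is $2^1$), and the strict inequality $2\,t(n,k-2)<t(n,k-1)+t(n,k-2)$ is just the statement $t(n,k-2)<t(n,k-1)$, which is part (iii) applied to $k-2$.

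Finally, (vi) reduces to two verifications. For odd $k\ge 3$, using $t(n,k)=3\cdot 2^{(k-3)/2}$ and (for $k\ge 5$) $t(n,k-1)+t(n,k-3)=2^{(k-1)/2}+2^{(k-3)/2}=3\cdot 2^{(k-3)/2}$, equality is immediate; the case $k=3$ is the equality $3=2+1$ from (ii). For even $k\ge 4$, handle $k=4$ by hand ($t(n,4)=4<5=t(n,3)+t(n,1)$), and for even $k\ge 6$ compute
\[
\frac{t(n,k-1)+t(n,k-3)}{t(n,k)} \;=\; \frac{3\cdot 2^{(k-4)/2}+3\cdot 2^{(k-6)/2}}{2^{k/2}} \;=\; \frac{9}{8},
\]
so the right-hand side strictly exceeds $t(n,k)$.

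The proof presents no genuine obstacle; it is a collection of small algebraic verifications. The only care required is to track parities of $k$ consistently across the six clauses and to check the boundary cases $k\in\{3,4\}$ separately, since these fall outside the clean recursions that govern the tail of the sequence.
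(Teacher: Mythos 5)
Your proposal is correct and takes essentially the same route as the paper: direct substitution of the closed form $t(n,k)=3\cdot 2^{(k-3)/2}$ for odd $k\ge 3$, a parity split for each clause, and separate treatment of the small boundary cases. The only cosmetic slip is in (iv), where for even $k=4$ the ratio $4\,t(n,k-3)/t(n,k)$ equals $2$ rather than $3/2$ (since $t(n,1)=2$ is not given by the odd-$k$ formula), but the required inequality still holds and you already flag that $k\in\{3,4\}$ need separate checking.
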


\begin{proof} The proof of (i) is trivial. For the proof of (ii), if $n\ge 4$, then
$$t(n, 3) = \tfrac{3}{2\sqrt{2}}2^{\sfrac{3}{2}} = 3 = t(n, 2) + t(n, 0) < 4 = 2\cdot t(n, 1) = t(n, 2) + t(n, 1).$$
For the proof of (iii), if $n\ge 4$, $2\le k\le n-2$, and $k$ is even, then
\begin{align*}
t(n, k) & = 2^{\sfrac{k}{2}}
< \tfrac{3}{2}\cdot 2^{\sfrac{k}{2}}
= \tfrac{3}{2\sqrt{2}}2^{\sfrac{k}{2}}\cdot \sqrt{2}
= \tfrac{3}{2\sqrt{2}}2^{\sfrac{(k+1)}{2}} = t(n, k+1)
\end{align*}
while if $n\ge 4$, $2\le k\le n-2$, and $k$ is odd, then
\begin{align*}
t(n, k) & = \tfrac{3}{2\sqrt{2}}2^{\sfrac{k}{2}}
=  3\cdot 2^{\sfrac{(k-3)}{2}} < 4\cdot 2^{\sfrac{(k-3)}{2}} = 2^{\sfrac{(k+1)}{2}} = t(n, k+1).
\end{align*}

Now consider (iv). If $n\ge 4$ and $k=3$, then $t(n, 3) = 3 < 4 = 4\cdot t(n, 0)$ and, if $n\ge 5$ and $k=4$, then $t(n, 4) = 4 < 8 = 4\cdot t(n, 1)$, so we may assume that $n\ge 6$ and $5\le k\le n-1$. If $k$ is odd, then
$$t(n, k) = \tfrac{3}{2\sqrt{2}}2^{\sfrac{k}{2}} = 3\cdot 2^{\sfrac{(k-3)}{2}} < 4\cdot 2^{\sfrac{(k-3)}{2}} < 4\cdot t(n, k-3),$$
while if $k$ is even, then
$$t(n, k) = 2^{\sfrac{k}{2}} = 2\cdot 2^{\sfrac{(k-2)}{2}} < 3\cdot 2^{\sfrac{(k-2)}{2}} = 4\cdot \tfrac{3}{2\sqrt{2}}2^{\sfrac{(k-3)}{2}} = 4\cdot t(n, k-3).$$

For the proof of (v), if $n\ge 5$, $4\le k\le n-1$, and $k$ is even, then, by (iii),
$$t(n, k) = 2^{\sfrac{k}{2}} = 2\cdot 2^{\sfrac{(k-2)}{2}} = 2\cdot t(n, k-2) < t(n, k-1) + t(n, k-2).$$
If $n\ge 5$, $4\le k\le n-1$, and $k$ is odd, then, by (iii),
$$t(n, k) = \tfrac{3}{2\sqrt{2}} 2^{\sfrac{k}{2}} = 2\cdot \tfrac{3}{2\sqrt{2}} 2^{\sfrac{(k-2)}{2}} = 2\cdot t(n, k-2) < t(n, k-1) + t(n, k-2).$$

Lastly, consider (vi). If $n\ge 4$ and $k=3$, then
$$t(n, k-1) + t(n, k-3) = t(n, 2) + t(n, 0) = t(n, 3).$$
Furthermore, if $n\ge 6$, $3\le k\le n-1$, and $k$ is odd, then, as $k-1$ and $k-3$ are even,
\begin{align*}
t(n, k-1) + t(n, k-3) & = 2^{\sfrac{(k-1)}{2}} + 2^{\sfrac{(k-3)}{2}} = 2^{\sfrac{(k-3)}{2}}\left(2 + 1 \right) \\
& = 3 \cdot 2^{\sfrac{(k-3)}{2}} = \frac{3}{2\sqrt{2}}2^{\sfrac{k}{2}} = t(n, k).
\end{align*}
If $n\ge 5$, $4\le k\le n-1$, and $k$ is even, then, as $k-1$ and $k-3$ are odd,
\begin{align*}
t(n, k-1) + t(n, k-3) & = \tfrac{3}{2\sqrt{2}} 2^{\sfrac{(k-1)}{2}} + \tfrac{3}{2\sqrt{2}} 2^{\sfrac{(k-3)}{2}} = \tfrac{3}{2\sqrt{2}} 2^{\sfrac{(k-3)}{2}} (2+1) \\
& > \tfrac{2^3}{2\sqrt{2}} 2^{\sfrac{(k-3)}{2}} = 2^{\sfrac{k}{2}} = t(n, k).
\end{align*}

\end{proof}

The next lemma establishes the number of distinct phylogenetic trees displayed by an octopus.

\begin{lemma} 
Let $\cN$ be an octopus with $n$ leaves and $k \neq 1$ reticulations. Then $|T(\cN)|=t(n, k)$.
\label{octopus}
\end{lemma}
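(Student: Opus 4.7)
The natural plan is to use the block (i.e., $2$-connected component) decomposition of $\cN$. By definition of an octopus, every non-trivial block is the core of a $2$-tight or $3$-tight caterpillar ladder, and two distinct blocks share at most one cut vertex. Since every reticulation lies in a unique non-trivial block, an embedding of a displayed tree is determined by independent choices, one reticulation arc per reticulation within each block, and any combination of such choices yields a valid embedding by the characterisation of tree-child embeddings recalled in Section~\ref{sec:preliminaries}. I would first argue that the number of non-isomorphic trees displayed factorises over the blocks: if $B_1,\dots,B_m$ are the non-trivial blocks of $\cN$ and $f(B_i)$ denotes the number of non-isomorphic local topologies realisable at $B_i$ (treating each cut vertex of $B_i$ as a distinguished label, by way of the distinct pendant structures hanging off it), then
\[
|T(\cN)| \;=\; \prod_{i=1}^{m} f(B_i).
\]

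Next, I would compute $f(B)$ for each block type by direct enumeration. For the core $B$ of a $2$-tight caterpillar ladder there are $2^2 = 4$ choices of reticulation arcs; tracing through each choice and suppressing the resulting in-degree-one out-degree-one vertices shows that three of them yield a caterpillar whose bottom cherry contains the attachment points of $\ell_0$ and $\ell_1$, while the fourth yields a caterpillar whose bottom cherry contains the attachment points of $\ell_0$ and $\ell_2$. Hence $f(B) = 2$. For the core $B$ of a $3$-tight caterpillar ladder there are $2^3 = 8$ choices. Indexing each choice by the parent selected for each of $v_1, v_2, v_3$, one verifies that the order in which $\ell_0, \ell_1, \ell_2, \ell_3$ appear along the caterpillar spine of the resulting tree takes exactly three values, namely $(\ell_0,\ell_1,\ell_2,\ell_3)$, $(\ell_0,\ell_1,\ell_3,\ell_2)$, and $(\ell_0,\ell_2,\ell_1,\ell_3)$. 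Hence $f(B) = 3$.

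Finally, one combines these contributions. The case $k = 0$ is immediate, as $\cN$ is a phylogenetic tree. For $k \ge 2$ even, every non-trivial block of $\cN$ is a $2$-tight core, so $m = k/2$ and $|T(\cN)| = 2^{k/2} = t(n,k)$. For $k \ge 3$ odd, exactly one non-trivial block is a $3$-tight core and the remaining $m - 1 = (k-3)/2$ blocks are $2$-tight cores, so
\[
|T(\cN)| \;=\; 3 \cdot 2^{(k-3)/2} \;=\; \tfrac{3}{2\sqrt{2}}\, 2^{k/2} \;=\; t(n,k).
\]

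The main obstacle I anticipate is rigorously justifying the factorisation step, namely ruling out the possibility that two globally distinct block-wise choices produce isomorphic phylogenetic $X$-trees. The cleanest route is to observe that each cut vertex of a block $B_i$ is attached (through the rest of $\cN$) to a distinct cluster on $X$, so any isomorphism of displayed trees must fix these clusters pointwise and hence cannot exchange different local topologies arising from different choices within $B_i$. The block enumerations themselves are routine but require careful bookkeeping of which degree-two vertices are suppressed.
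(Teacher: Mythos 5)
Your proof is correct, but it takes a genuinely different route from the paper's. The paper proceeds by induction on $k$: it locates a lowest caterpillar ladder $\cN'$ of $\cN$ (one whose reticulation has no reticulation descendants), forms two (or three) sub-octopuses with $k-2$ (or $k-3$) reticulations by deleting suitable rungs of $\cN'$, observes that the corresponding sets of displayed trees are pairwise disjoint and cover $T(\cN)$, and then invokes the recurrences of Lemma~\ref{tnk}. You instead prove a global product formula $|T(\cN)|=\prod_i f(B_i)$ over the non-trivial $2$-connected components and compute $f(B)\in\{2,3\}$ by direct enumeration of the $4$ (resp.\ $8$) arc choices per ladder core; your enumerations are correct (for the $3$-tight core the eight choices collapse onto exactly the three caterpillar orders you list, with multiplicities $4,2,2$). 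The factorisation step you flag as the main obstacle is sound, and your proposed justification is the right one: the leaf sets hanging below the distinct cut vertices of a block are non-empty and pairwise disjoint, so the displayed tree determines the local topology realised at each block independently, and conversely any combination of per-block choices assembles into a valid embedding by the characterisation of tree-child embeddings recalled in Section~\ref{sec:preliminaries}. (It may help to note that in a binary network two non-trivial blocks can never share a vertex, since a vertex has total degree at most three and needs degree at least two in each non-trivial block containing it, so the blocks are genuinely separated by cut arcs.) What each approach buys: your product formula is arguably more transparent and immediately explains \emph{why} the count is multiplicative over ladders, while the paper's inductive peeling of a lowest ladder is structured to mirror, and feed directly into, the inductive argument used later in the proof of Theorem~\ref{main}.
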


\begin{proof}
The proof is by induction on $k$. Evidently, if $k=0$, then $| T(\cN)| = 1 = t(n,0)$. Furthermore, if $k=2$, then $\cN$ contains precisely one non-trivial $2$-connected component which is the core of a $2$-tight caterpillar ladder and it is easy to verify that $|T(\cN)| = 2 = t(n, 2)$. Similarly, if $k=3$, then $\cN$ contains precisely one non-trivial $2$-connected component which is the core of a $3$-tight caterpillar ladder and $|T(\cN)| = 3 = t(n, 3)$. Thus the lemma holds for $k\in \{0, 2, 3\}$.

Now suppose that $k\geq 4$, in which case $n\ge 5$, and the lemma holds for all octopuses with at most $k-1$ reticulations. Let $v$ be a reticulation of $\cN$ with the property that all paths starting at $v$ are tree paths, that is, there are no other reticulations among the descendants of $v$. As $\cN$ is an octopus, $v$ is a reticulation of either a $2$-tight or $3$-tight caterpillar ladder, say $\cN'$, of $\cN$.

Assume that $\cN'$ is a $2$-tight caterpillar ladder of $\cN$. Let $\cN_1$ be the tree-child network obtained from $\cN$ by deleting the third and last rungs of $\cN'$, and let $\cN_2$ be the tree-child network obtained from $\cN$ by deleting the first and last rungs of $\cN'$. Clearly, $\cN_1$ and $\cN_2$ are octopuses with $n$ leaves and $k-2$ reticulations. Moreover, observe that $T(\cN_1)\cap T(\cN_2)$ is empty and, if $\cT\in T(\cN)$, then either $\cT\in T(\cN_1)$ or $\cT\in T(\cN_2)$.  Thus, by induction and Lemma~\ref{tnk}(v),
\begin{align*}
|T(\cN)| & = |T(\cN_1)| + |T(\cN_2)| 
= t(n, k-2) + t(n, k-2) = t(n, k).
\end{align*}

Now assume that $\cN'$ is a $3$-tight caterpillar ladder of $\cN$, in which case, $k$ is odd and so $k\ge 5$. Let $\cN_1$, $\cN_2$, and $\cN_3$ be the octopuses on $k-3$ reticulations obtained from $\cN$ by deleting the third, fifth, and last rungs of $\cN'$, the first, fifth, and last rungs of $\cN'$, and the second, third, and last rungs of $\cN'$, respectively. It is easily verified that $T(\cN_i)\cap T(\cN_j) = \emptyset$ for all distinct $i, j\in \{1, 2, 3\}$. Furthermore, if $\cT \in T(\cN)$, then $\cT \in T(\cN_1) \cup T(\cN_2) \cup T(\cN_3)$. Therefore, as $k$ is odd, it follows by induction that
\begin{align*}
|T(\cN)| & = |T(\cN_1)| + |T(\cN_2)| + |T(\cN_3)| \\
& = t(n, k-3) + t(n, k-3) + t(n, k-3) \\
& = 3\cdot 2^{\sfrac{(k-3)}{2}} = \tfrac{3}{2\sqrt{2}}2^{\sfrac{k}{2}} = t(n, k).
\end{align*}
This completes the proof of the lemma.
\end{proof}

For the proof of Theorem~\ref{main}, we need to understand what happens if we delete a reticulation arc of a tree-child network and create a $3$-cycle. The next three lemmas consider tree-child networks and $3$-cycles. 

\begin{lemma}
Let $\cN$ be a tree-child network with no $3$-cycles, and let $e$ be a reticulation arc of~$\cN$. Suppose that $\cN\ba e$ has a $3$-cycle with reticulation arcs $f$ and $f'$. Then each of $\cN\ba \{e, f\}$ and $\cN\ba \{e, f'\}$ is tree-child and has no $3$-cycles.
\label{no3cycles}
\end{lemma}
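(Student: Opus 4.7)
The plan is to handle the two conclusions separately: tree-childness follows from two applications of Lemma~\ref{deletion}, while the no-3-cycles conclusion requires a structural analysis using the tree-child hypothesis on $\cN$. For tree-childness, $\cN\ba e$ is tree-child by Lemma~\ref{deletion}; since $f$ and $f'$ are reticulation arcs of $\cN\ba e$, a second application of Lemma~\ref{deletion} yields that $\cN\ba\{e,f\} = (\cN\ba e)\ba f$ and $\cN\ba\{e,f'\} = (\cN\ba e)\ba f'$ are both tree-child.

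For the no-3-cycles conclusion, I will first pin down the structure around the 3-cycle in $\cN\ba e$. Write $e=(u,v)$ with $v$ a reticulation, and let $b$ be the other parent of $v$, $c_v$ the child of $v$, $w_u$ the other child of $u$, and $p_u$ the parent of $u$ (assuming $u\neq\rho$). Tree-childness of $\cN$ applied at $v$ and at $u$ forces both $c_v$ and $w_u$ to be non-reticulations, so the only two arcs newly created by $\cN\ba e$, namely $(p_u,w_u)$ and $(b,c_v)$, are tree arcs. Since the 3-cycle of $\cN\ba e$ does not lie in $\cN$, its tree arc must be new. The subcase of tree arc $(b,c_v)$ is immediately ruled out: it would yield a reticulation $r'$ with $(b,r')$ an (old) arc of $\cN$, giving $b$ two reticulation children ($v$ and $r'$) and violating tree-childness. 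Hence the tree arc is $(p_u,w_u)$, and the 3-cycle sits at a reticulation $r$ with parents $p_u,w_u$ in both $\cN\ba e$ and $\cN$, so $\{f,f'\}=\{(p_u,r),(w_u,r)\}$. Tree-childness applied at $w_u$ further forces the other child $w'$ of $w_u$ to be a non-reticulation.

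Next I will examine the two subcases for the arc to delete. Deleting $(p_u,r)$ from $\cN\ba e$ suppresses $p_u$ and $r$, creating at most the new tree arcs $(p_{p_u},w_u)$ and $(w_u,c_r)$, where $c_r$ is the child of $r$; deleting $(w_u,r)$ instead suppresses $w_u$ and $r$, creating the new tree arcs $(p_u,w')$ and $(p_u,c_r)$. In both networks the arc $(b,c_v)$ persists. Any hypothetical 3-cycle in $\cN\ba\{e,f\}$ or $\cN\ba\{e,f'\}$ must have a new tree arc (otherwise it already lies in $\cN$), so I exhaust the possibilities. In the $(p_u,r)$-deletion, each of $(p_{p_u},w_u)$ and $(w_u,c_r)$ as tree arc of a 3-cycle forces a reticulation child of $w_u$ other than $r$, hence forces $w'$ to be a reticulation, contradicting the observation above. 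In the $(w_u,r)$-deletion, each of $(p_u,w')$ and $(p_u,c_r)$ as tree arc forces a reticulation child of $p_u$ other than $r$, but $p_u$'s children in $\cN$ are exactly $u$ and $r$, and $u$ is a tree vertex, giving a contradiction. In either deletion, $(b,c_v)$ as tree arc of a 3-cycle forces $b$ to have two reticulation children in $\cN$, again contradicting tree-childness.

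The main obstacle is the bookkeeping: I must track which vertices have been removed after each suppression, correctly determine whether each arc in a hypothetical 3-cycle is new or old, and remember to rule out the removed vertices $u,v,p_u,r$ (or $u,v,w_u,r$) when locating the putative reticulation of a new 3-cycle. Once this accounting is in place, the tree-child property of $\cN$ at the appropriate vertex ($w_u$, $p_u$, or $b$) closes every case cleanly.
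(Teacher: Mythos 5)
Your proof is correct and follows essentially the same route as the paper's: pin down the local structure of the $3$-cycle created in $\cN\ba e$ (its tree arc must be the new arc $(p_u,w_u)$ obtained by suppressing the tail of $e$, so $\{f,f'\}=\{(p_u,r),(w_u,r)\}$), and then show that any $3$-cycle surviving in either further deletion would force some vertex of $\cN$ (namely $w_u$, $p_u$, or $b$) to be the parent of two reticulations. Your case analysis is in fact slightly more exhaustive than the paper's, which simply asserts which newly created arc a hypothetical $3$-cycle must contain rather than explicitly ruling out the other new tree arcs (such as the one arising from suppressing the head of $e$).
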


\begin{proof}
Consider $\cN \ba e$ and denote the arcs of the $3$-cycle of $\cN\ba e$ as $f = (u_1, v)$, $f'=(u_2,v)$, and $h = (u_1, u_2)$. In particular, $v$ is a reticulation, and $f$ is a shortcut of $\cN$ and $\cN\ba e$. Note that $\cN$ and $\cN\ba e$ contain the arcs $f$ and $f'$, but $\cN$ does not contain $h$. Instead, $\cN$ contains two arcs, say $h_1 = (u_1, s)$ and $h_2 = (s, u_2)$, such that $e = (s, t)$ is the reticulation arc that is deleted to obtain $\cN \ba e$ from $\cN$. We now argue that both $\cN\ba \{e, f\}$ and $\cN\ba \{e, f'\}$ are tree-child and have no $3$-cycles.

First, consider $\cN\ba \{e, f\}$ and suppose that it contains a $3$-cycle, say $C$. Then $C$ contains the arc $(p,u_2)$, where $p$ is the unique parent of $u_1$ in $\cN$. The two remaining arcs of $C$ are reticulation arcs, say $(p,r)$ and $(u_2,r)$, where $r \neq v$ is a reticulation in  $\cN\ba \{e, f\}$ and also in $\cN$. Since there is also an arc $(u_2,v)$ in $\cN$ and $v$ is a reticulation, this implies that both children of $u_2$ are reticulations, contradicting the fact that $\cN$ is tree-child. Thus $\cN\ba \{e, f\}$ is tree-child and has no $3$-cycle.

Next, consider $\cN\ba \{e, f'\}$ and suppose that it contains a $3$-cycle, say $C'$. Then $C'$ contains the arc $(u_1,w)$, where $w \neq v$ is a child of $u_2$ in $\cN$. As $\cN$ is tree-child, $w$ is a tree vertex. Thus the two remaining arcs of $C'$ are reticulation arcs incident with the same reticulation, say $r'\neq v$. But then $(u_1, r')$ is an arc of $C'$ and so, as $r'$ is a reticulation of $\cN$, the vertex $u_1$ is the parent of two reticulations in $\cN$, a contradiction as $\cN$ is tree-child. Therefore $\cN\ba \{e, f'\}$ is tree-child and has no $3$-cycles, thereby completing the proof of the lemma.
\end{proof}

\begin{lemma}
Let $\cN$ be a tree-child network with no $3$-cycles, and let $e = (u,v)$ be a reticulation arc of $\cN$ such that, amongst all reticulation arcs of $\cN$, $u$ has minimum distance to the root of $\cN$. Then $\cN \ba e$ is tree-child and has no $3$-cycles. 
\label{tail}
\end{lemma}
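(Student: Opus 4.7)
The plan is to invoke Lemma~\ref{deletion} immediately for the tree-child conclusion, and to tackle the $3$-cycle-freeness by contradiction. I will assume $\cN\ba e$ contains a $3$-cycle $C$; since $\cN\ba e$ is tree-child, $C$ must have arcs $(u_1,v'),(u_2,v'),(u_1,u_2)$, where $v'$ is a reticulation and $(u_1,u_2)$ is a tree arc, exactly as in the setup of Lemma~\ref{no3cycles}.

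The key preparatory step is to keep track of which arcs of $\cN\ba e$ are genuinely new. Let $p$ be the parent of $u$ in $\cN$ (when $u\neq\rho$), let $w$ be the other child of $u$, let $u'$ be the other parent of $v$, and let $c$ be the child of $v$. Suppressing $v$ turns the pair $(u',v),(v,c)$ into the new arc $(u',c)$, and suppressing $u$ (when $u\neq\rho$) turns the pair $(p,u),(u,w)$ into the new arc $(p,w)$; every other arc of $\cN\ba e$ is already an arc of $\cN$. I would first observe that the heads $w$ and $c$ of these new arcs cannot be reticulations: the tree-child property forbids $w$ (since $u$ already has $v$, a reticulation, as a child) and forbids $c$ (since its parent $v$ is itself a reticulation). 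Hence neither reticulation arc $(u_1,v')$ nor $(u_2,v')$ of $C$ can be a new arc, and so both already lie in $\cN$. If the tree arc $(u_1,u_2)$ also lay in $\cN$, then $C$ would be a $3$-cycle of $\cN$, contradicting our hypothesis. Therefore $(u_1,u_2)$ must be one of the two new arcs.

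The remaining case analysis then delivers the contradiction. If $(u_1,u_2)=(u',c)$, then the arc $(u',v')$ of $C$ witnesses that $u'$ is the parent of two reticulations $v$ and $v'$ in $\cN$, violating tree-childness. If instead $(u_1,u_2)=(p,w)$, then $(p,v')$ is a reticulation arc of $\cN$; since $u$ is a tree vertex with unique parent $p$, the distance from the root of $\cN$ to $p$ is strictly less than the distance to $u$, contradicting the minimality built into the choice of $e$. The case $u=\rho$ is handled by the first subcase alone, since only the new arc $(u',c)$ is available. The main obstacle I expect is the bookkeeping of new versus old arcs together with the tree-child exclusions that force the two reticulation arcs of $C$ to be old; once that observation is in hand, the remaining contradiction is a short two-case argument, with the distance-minimality hypothesis entering only at the very end.
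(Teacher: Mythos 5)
Your proof is correct and follows essentially the same route as the paper: the paper's one-sentence argument observes that a new $3$-cycle would force the parent $p$ of $u$ to have a reticulation child, contradicting the minimality of $u$'s distance to the root. Your version is simply more explicit, in that you also spell out why the other new arc $(u',c)$ cannot lie on a $3$-cycle (it would make $u'$ the parent of two reticulations in $\cN$), a case the paper leaves implicit.
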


\begin{proof}
If $\cN\ba e$ has a $3$-cycle, then the parent $p$ of $u$ in $\cN$ has a child that is a reticulation. But then $p$ is closer to the root of $\cN$ than $u$, a contradiction. Hence $\cN\ba e$ has no $3$-cycles. 
\end{proof}

The last lemma in this section is a technical lemma that is used in the inductive proof of Theorem~\ref{main}. Recall that a reticulation is normal if neither reticulation arc directed into it is a shortcut.

\begin{lemma}
Let $\cN$ be a tree-child network with $n$ leaves, $k\ge 2$ reticulations, and no $3$-cycles. Suppose that $\cN$ has a normal reticulation and, for all tree-child networks $\cN'$ with $n$ leaves, $k'< k$ reticulations, and no $3$-cycles, $|T(\cN')|\ge t(n, k')$. Then $|T(\cN)| > t(n, k)$.
\label{normal}
\end{lemma}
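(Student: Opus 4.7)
\textbf{Plan for the proof of Lemma~\ref{normal}.}
Let $v$ be a normal reticulation of $\cN$ with reticulation arcs $e_1=(u_1,v)$ and $e_2=(u_2,v)$; by hypothesis, neither is a shortcut. Set $\cN_i := \cN\ba e_i$ for $i\in\{1,2\}$. By Lemma~\ref{deletion}, each $\cN_i$ is a tree-child network with $n$ leaves and $k-1$ reticulations. Every embedding of every $\cT\in T(\cN)$ uses exactly one of $e_1,e_2$ at $v$, so $T(\cN) = T(\cN_1)\cup T(\cN_2)$. The first substantive step is to show that this union is disjoint. Since $e_1,e_2$ are not shortcuts, $u_1$ and $u_2$ are incomparable in $\cN$: any directed path from $u_i$ to $u_{3-i}$ followed by $e_{3-i}$ would turn $e_i$ into a shortcut. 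Assuming for contradiction that some $\cT$ lies in both $T(\cN_1)$ and $T(\cN_2)$, I would trace the leaf $\ell$ reached by the tree path from $v$ in each embedding and compare the sibling cluster of $\ell$ in $\cT$; the incomparability of $u_1,u_2$ forces these clusters to disagree. This gives $|T(\cN)| = |T(\cN_1)| + |T(\cN_2)|$.

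Next I would apply the induction hypothesis to each $\cN_i$, splitting on whether it has a $3$-cycle. If $\cN_i$ has no $3$-cycle, induction gives $|T(\cN_i)| \ge t(n,k-1)$ directly. If $\cN_i$ has a $3$-cycle with reticulation arcs $f,f'$, Lemma~\ref{no3cycles} gives that $\cN_i\ba f = \cN\ba\{e_i,f\}$ is tree-child with no $3$-cycles and has $k-2$ reticulations. Using the fact that the two reticulation arcs of any $3$-cycle produce the same displayed tree (noted in the preliminaries), $|T(\cN_i)| = |T(\cN_i\ba f)| \ge t(n,k-2)$ by induction. Summing, $|T(\cN)|$ is at least $2t(n,k-1)$, $t(n,k-1)+t(n,k-2)$, or $2t(n,k-2)$ as $0$, $1$, or $2$ of the $\cN_i$ have $3$-cycles. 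By parts~(v)--(vi) of Lemma~\ref{tnk}, the first two values strictly exceed $t(n,k)$, which closes those two subcases.

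The hard part will be the remaining subcase, where \emph{both} $\cN_1$ and $\cN_2$ have $3$-cycles, since $2t(n,k-2) = t(n,k)$ gives only equality. In this configuration, the demand that both deletions produce $3$-cycles pins down very restrictive local structure around $v$: $\cN$ must contain a length-$3$ shortcut above $u_1$ (the one that collapses to a $3$-cycle in $\cN_1$) and another above $u_2$, each threading through the parent of the corresponding $u_i$ and through $u_i$'s sibling child. To extract strict inequality, I would exploit this rigidity to argue that one of the further reduced networks $\cN\ba\{e_i,f\}$ inherits a normal reticulation from the surviving vertices near the parents of $u_1,u_2$, so that a recursive application of the present lemma yields $|T(\cN_i)| > t(n,k-2)$ and therefore $|T(\cN)| > t(n,k)$. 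Making both the disjointness step and this tight-case structural dichotomy rigorous are the technically delicate parts of the argument.
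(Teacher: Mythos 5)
Your two easy cases (zero or one of $\cN\ba e_1$, $\cN\ba e_2$ has a $3$-cycle) match the paper's strategy and work: the two-way disjointness via restricting to $\{m,\ell_1,\ell_2\}$ (with $\ell_i$ a tree-path leaf below the sibling child of $u_i$) is sound because neither $e_i$ is a shortcut, and the bounds $2t(n,k-1)$ and $t(n,k-1)+t(n,k-2)$ strictly exceed $t(n,k)$ by Lemma~\ref{tnk}. The gap is in the case you correctly identify as hard, where both deletions create $3$-cycles. Your proposed escape --- that some reduced network $\cN\ba\{e_i,f\}$ ``inherits a normal reticulation'' so the lemma can be applied recursively --- is unjustified and, as far as I can see, false in general. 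The very structure that forces both $3$-cycles puts each $u_i$ inside a diamond $w_i'\to u_i\to w_i$ with $w_i',w_i$ both parents of a reticulation $v_i$; after deleting $e_1$ and one arc of the $3$-cycle at $v_1$, the path $w_2'\to u_2\to w_2$ survives, so $(w_2',v_2)$ remains a shortcut and $v_2$ is still not normal. Nothing prevents every remaining reticulation from being non-normal (tree-child networks with no $3$-cycles and no normal reticulations exist in abundance --- the octopuses are built entirely from them), so the recursion cannot be launched, and the bound $2t(n,k-2)=t(n,k)$ you are left with is exactly equality for $k=2$ and $k\ge 4$.

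The paper closes this case by a different, sharper decomposition: instead of splitting $T(\cN)$ two ways on the choice at $v$, it splits four ways on the choices at $v$ and at $v_1$ or $v_2$. Because the deleted arc $e_2$ sits inside the shortcut path $w_2'\to u_2\to w_2\to v_2$, the two choices $f_2$ versus $f_2'$ at $v_2$ become distinguishable in the displayed tree (via the restriction to $\{\ell_1,\ell_2,m,m_1,m_2\}$, tracking where $m$ attaches relative to $m_2$), and symmetrically for $v_1$. This yields four pairwise disjoint classes, each counted from below by a tree-child network with $k-3$ reticulations and no $3$-cycles (Lemma~\ref{no3cycles}), whence $|T(\cN)|\ge 4\,t(n,k-3)>t(n,k)$ by Lemma~\ref{tnk}(iv). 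If you want to salvage your outline, this four-way refinement is the missing idea; the ``recurse on a normal reticulation'' step should be discarded.
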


\begin{proof}
Let $v$ be a normal reticulation of $\cN$, and let $e_1$ and $e_2$ denote the reticulation arcs directed into $v$. The proof is partitioned into three cases depending on whether zero, one, or two of $\cN\ba e_1$ and $\cN\ba e_2$ has a $3$-cycle. We establish the lemma for when each of $\cN\ba e_1$ and $\cN\ba e_2$ has a $3$-cycle. The other two cases are proved similarly, but are less complicated.

Let $e_1=(u_1, v)$ and $e_2=(u_2, v)$, and let $m$ be a leaf at the end of a tree path starting at $v$. For each $i\in \{1, 2\}$, let $w_i$ and $w'_i$ denote the child of $u_i$ that is not $v$ and the parent of $u_i$, respectively. Since $\cN$ is tree-child and each of $\cN\ba e_1$ and $\cN\ba e_2$ has a $3$-cycle, for each $i$, the vertices $w_i$ and $w'_i$ are tree vertices and the parent of a reticulation $v_i$. For each $i$, let $f_i=(w_i, v_i)$ and $f'_i=(w'_i, v_i)$, and let $m_i$ be a leaf at the end of a tree path starting at $v_i$. Furthermore, let $\ell_i$ be a leaf at the end of a tree path starting at $w_i$. Observe that $\ell_1$, $\ell_2$, $m$, $m_1$, and $m_2$ are distinct as $v$ is normal and neither $e_1$ nor $e_2$ is a shortcut.

Let $\cT\in T(\cN)$. If $\cT$ uses $\{e_2, f_1, f'_2\}$ or $\{e_2, f_1, f_2\}$, then $\cT|\{\ell_1, \ell_2, m, m_1, m_2\}\cong \big\{(\ell_1, m_1), (m, \ell_2, m_2)\big\}$ and $\cT|\{\ell_1, \ell_2, m, m_1, m_2\}\cong \big\{(\ell_1, m_1), (\ell_2, m_2, m)\big\}$, respectively. On the other hand, if $\cT$ uses $\{e_1, f_2, f'_1\}$ or $\{e_1, f_2, f_1\}$, then $\cT|\{\ell_1, \ell_2, m, m_1, m_2\}\cong \big\{(\ell_1, m, m_1), (\ell_2, m_2)\big\}$ and $\cT|\{\ell_1, \ell_2, m, m_1, m_2\}\cong \big\{(\ell_1, m_1, m), (\ell_2, m_2)\big\}$, respectively. It now follows that
$$|T(\cN)|\ge |T(\cN\ba \{e_1, f'_1, f_2\})| + |T(\cN\ba \{e_1, f'_1, f'_2\})| + |T(\cN\ba \{e_2, f'_2, f_1\})| + |T(\cN\ba \{e_2, f'_2, f'_1\})|,$$
and so, by Lemmas~\ref{tnk}(iv) and~\ref{no3cycles}, and the assumption in the statement of the lemma,
$$|T(\cN)|\ge t(n, k-3) + t(n, k-3) + t(n, k-3) + t(n, k-3) > t(n, k).$$
This completes the proof of the lemma.
\end{proof}

\section{Proof of Theorem~\ref{main}} \label{proof}

The proof of Theorem~\ref{main} is inductive and relies on first showing that the theorem holds for all $k\in \{0, 1, 2, 3\}$, the base cases. The next lemma establishes this base case.

\begin{lemma}
Let $\cN$ be a tree-child network with $n$ leaves, $k\in \{0, 1, 2, 3\}$ reticulations, and no $3$-cycles. Then $|T(\cN)|=1$ if $k=0$, $|T(\cN)|=2$ if $k=1$, and
$$|T(\cN)|\ge
\begin{cases}
2, & \mbox{if $k=2$;} \\
3, & \mbox{if $k=3$.}
\end{cases}$$
Furthermore, if $|T(\cN)|=2$ and $k=2$ or $|T(\cN)|=3$ and $k=3$, then $\cN$ is an octopus.
\label{base}
\end{lemma}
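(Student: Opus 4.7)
The plan is an induction on $k\in\{0,1,2,3\}$ handled as a nested induction inside the statement itself: the larger $k$ cases will invoke the smaller ones via Lemmas~\ref{tail},~\ref{no3cycles},~\ref{tight}, and~\ref{normal}. For $k=0$, $\cN$ is a phylogenetic tree and $|T(\cN)|=1$. For $k=1$, the unique reticulation $v$ has in-arcs $e_1=(u_1,v)$ and $e_2=(u_2,v)$, and by Lemma~\ref{deletion} both $\cN\ba e_1$ and $\cN\ba e_2$ are displayed phylogenetic trees. To see they are distinct, pick a leaf $m$ at the end of a tree path starting at $v$ and compare the cluster at the parent of $v$'s subtree in each: in $\cN\ba e_1$ this cluster is $\{m\}$ (together with its siblings below $v$) joined with the leaves descended from the other child of $u_2$, while in $\cN\ba e_2$ it is joined instead with the leaves descended from the other child of $u_1$. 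The absence of a $3$-cycle forces these two cluster sets to differ, so $|T(\cN)|=2$.

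For $k=2$, apply Lemma~\ref{tail} to pick a reticulation arc $e=(u,v)$ whose tail $u$ lies closest to the root. Then $\cN\ba e$ is tree-child with no $3$-cycles and one reticulation, so by the $k=1$ case $|T(\cN\ba e)|=2$; since any displayed tree of $\cN\ba e$ is displayed by $\cN$, we have $|T(\cN)|\ge 2$. For the equality, $|T(\cN)|=2$ forces $T(\cN)=T(\cN\ba e)$, hence $e$ is non-essential. Lemma~\ref{tight} then implies that $e$ is the first or last rung of a $2$- or $3$-tight caterpillar ladder of $\cN$, and since $k=2$ only the $2$-tight option is available and both reticulations of $\cN$ lie on this ladder. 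A direct inspection of $2$-connected components then shows that the single non-trivial $2$-connected component of $\cN$ is precisely the core of this $2$-tight ladder, so $\cN$ is an octopus.

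For $k=3$, first invoke Lemma~\ref{normal} (applicable because the $k'<3$ cases are now in hand) to reduce to the situation where every reticulation of $\cN$ carries a shortcut, as otherwise $|T(\cN)|>t(n,3)=3$ already. Apply Lemma~\ref{tail} to obtain a topmost reticulation arc $e=(u,v)$; then $\cN\ba e$ is tree-child, has no $3$-cycles, and has two reticulations, so by the $k=2$ case $|T(\cN\ba e)|\ge 2$. If $|T(\cN\ba e)|\ge 3$ we immediately get $|T(\cN)|\ge 3$; otherwise $\cN\ba e$ is a $2$-tight caterpillar ladder octopus by the $k=2$ equality. In that event I would use the other in-arc $e'=(u',v)$ of $v$ (replacing it via Lemma~\ref{no3cycles} if deleting it creates a $3$-cycle) to exhibit at least one additional displayed tree, bringing $|T(\cN)|$ up to $3$. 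The equality characterisation $|T(\cN)|=3$ then splits into two situations: either $e$ is itself non-essential in $\cN$, in which case Lemma~\ref{tight} forces it to be the last rung of a $3$-tight caterpillar ladder of $\cN$ and hence $\cN$ is an octopus; or the third reticulation of $\cN$ lies in a $2$-connected component disjoint from the others, and then the independence of the two non-trivial $2$-connected components forces $|T(\cN)|\ge 2\cdot 2=4$, a contradiction.

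The main obstacle is the equality characterisation for $k=3$: given that $\cN\ba e$ is a $2$-tight caterpillar ladder octopus, I must show that the removed reticulation $v$ can only be reattached in a way that extends the $2$-tight core into a $3$-tight core. The delicate step is a counting argument ruling out alternative attachments: if $v$ has a non-shortcut in-arc, Lemma~\ref{normal} pushes $|T(\cN)|$ strictly above $3$; if $v$'s $2$-connected component is disjoint from the existing ladder's, independence multiplies the tree count; and any other configuration produces trees outside $T(\cN\ba e)$ that combine with $T(\cN\ba e)$ to exceed three distinct trees. Making these counting bounds precise and exhaustive is where the bulk of the work lies.
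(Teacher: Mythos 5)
Your handling of $k\in\{0,1,2\}$ is sound and essentially the paper's argument: delete a topmost reticulation arc (Lemma~\ref{tail}), invoke the previous case, and for the equality use non-essentiality together with Lemma~\ref{tight}. The idea of invoking Lemma~\ref{normal} at $k=3$ to reduce to the case where every reticulation has a shortcut in-arc is legitimate (no circularity, since the $k'<3$ hypotheses are in hand) and is a reasonable variation on the paper, which saves Lemma~\ref{normal} for the inductive step $k\ge 4$.

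The $k=3$ case, however, has two genuine gaps. First, the lower bound $|T(\cN)|\ge 3$: when $|T(\cN\ba e)|=2$ you say you ``would use the other in-arc $e'$ to exhibit at least one additional displayed tree,'' but producing a displayed tree that is provably \emph{not} already in $T(\cN\ba e)$ is exactly the hard step. The paper does this by showing that $|T(\cN)|=2$ would force $e$ to be the last rung of a $2$-tight ladder on reticulations $v_1,v_2$, and then deriving a contradiction from the third reticulation $v_3$: if some $\cN\ba g_i$ has no $3$-cycle then $g_i$ would be non-essential, which Lemma~\ref{tight} rules out, and if both create $3$-cycles the tree-child structure of the ladder rules that out too. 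Nothing in your sketch substitutes for this. Second, the characterisation: your dichotomy (``$e$ non-essential $\Rightarrow$ last rung of a $3$-tight ladder $\Rightarrow$ octopus'' versus ``$v_3$ in a disjoint $2$-connected component $\Rightarrow$ $|T(\cN)|\ge 4$'') is not exhaustive. Lemma~\ref{tight} also allows $e$ to be a rung of a $2$-tight ladder, with $v_3$ attached in configurations where the naive ``independence multiplies the count'' argument fails --- precisely the degenerate subcases the paper must treat separately: $a=m_3$ (a tree path from $v_3$ reaching down to the ladder), and $b=c$ (one in-arc of $v_3$ a shortcut, requiring an auxiliary witness leaf $\ell$ off the child $w$ of $u_3'$). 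In each the paper constructs four explicitly distinct displayed trees via restrictions to carefully chosen leaf sets; this case analysis is the bulk of the proof and you have acknowledged rather than supplied it.
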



\begin{proof} 
Evidently, if $k=0$, then $|T(\cN)|=1$. Furthermore, as $\cN$ has no $3$-cycles, if $k=1$, then $|T(\cN)|=2$, so the lemma holds for $k\in \{0, 1\}$. For the remainder of the proof, we may assume without loss of generality that, amongst all tree-child networks with $n$ leaves, $k\in \{2,3\}$ reticulations, and no $3$-cycles, $|T(\cN)|$ is minimised, in which case, by Lemma~\ref{octopus}, $|T(\cN)|\le t(n, k)$.

First, consider when $k=2$. Let $f = (u,v)$ be a reticulation arc of $\cN$ such that, amongst all reticulation arcs of $\cN$, the vertex $u$ has minimum distance to the root of $\cN$. By Lemma~\ref{tail}, $\cN \ba f$ is tree-child and has no $3$-cycles. Therefore, as $\cN\ba f$ has exactly one reticulation, it follows by the previous base case that $|T(\cN \ba f)|=2$. Furthermore, by Lemma~\ref{octopus}, $|T(\cN)| \leq t(n,2) = 2$, and so
\[ |T(\cN)| = |T(\cN\ba f)| = 2.\]
Thus $f$ is non-essential, and, by Lemma~\ref{tight}, $f$ is the first or last rung of a $2$-tight caterpillar ladder of $\cN$ (in fact, by the choice of $f$, it is the last rung). As $k=2$, this implies that $\cN$ is an octopus, thereby completing the proof for when $k=2$.

Now consider when $k=3$. Again, let $f = (u,v)$ be a reticulation arc of $\cN$ such that, amongst all reticulation arcs of $\cN$, the vertex $u$ has minimum distance to the root of $\cN$. Lemma~\ref{tail} implies that $\cN\ba f$ is tree-child and has no $3$-cycles. Furthermore, as $\cN\ba f$ has exactly two reticulations, it follows by the previous base case that $|T(\cN\ba f)|\ge 2$. Additionally, by Lemma~\ref{octopus}, $|T(\cN)| \leq t(n,3) = 3$, and so $|T(\cN)|\in \{2, 3\}$.

First, suppose that $|T(\cN)| = 2$. Then $|T(\cN)| = |T(\cN \ba f)| =2$, and so $f$ is non-essential. By Lemma~\ref{tight} and the choice of $f$, we have that $f$ is the last rung of a $2$-tight or $3$-tight caterpillar ladder of $\cN$. If $f$ is the last rung of a $3$-tight caterpillar ladder, then $\cN$ is an octopus and $|T(\cN)|=3$, a contradiction as $|T(\cN)| = 2$. Therefore $f$ is the last rung of a $2$-tight caterpillar ladder of $\cN$ with reticulations, $v_1$ and $v_2$ say. Let $v_3$ denote the third reticulation in $\cN$, and let $g_1$ and $g_2$ be the reticulation arcs of $\cN$ directed into $v_3$.

Assume that either $\cN \ba g_1$ or $\cN \ba g_2$ has no $3$-cycle. Without loss of generality, assume $\cN \ba g_1$ has no $3$-cycle. By the previous base case, $|T(\cN \ba g_1)| \geq 2$ and, by assumption, $|T(\cN)|=2$, implying that $|T(\cN)| = |T(\cN \ba g_1)| = 2$, and so $g_1$ is non-essential. Therefore, by Lemma~\ref{tight}, $g_1$ is either the first or last rung of a $2$-tight or $3$-tight caterpillar ladder of $\cN$. However, as $k=3$ and $\cN$ contains a $2$-tight caterpillar ladder with reticulations $v_1$ and $v_2$, and $v_3\not\in \{v_2, v_3\}$, this is not possible.
Thus, for each $i\in \{1, 2\}$, the tree-child network $\cN\ba g_i$ has a $3$-cycle. But neither $3$-cycle involves $v_1$ nor $v_2$ as they are the reticulations of a $2$-tight caterpillar ladder of $\cN$, and so this is also not possible.
In summary, we cannot have $|T(\cN)|=2$, and so $|T(\cN)|=3$. It remains to show that $\cN$ is an octopus. 

Suppose that $|T(\cN)|=3$. By Lemma~\ref{tail}, $\cN\ba f$ is tree-child and has no $3$-cycles. Furthermore, by the previous base case, $|T(\cN \ba f)| \geq 2$ and so, as  $|T(\cN)|=3$, we have $|T(\cN\ba f)| \in \{2, 3\}$. We now distinguish two cases depending on $|T(\cN\ba f)|$:
\begin{enumerate}[{\rm (a)}]
\item If $|T(\cN \ba f)| = |T(\cN)|=3$, then $f$ is non-essential and, by Lemma~\ref{tight} and the choice of $f$, it is the last rung of a $2$-tight or $3$-tight caterpillar ladder of $\cN$. If $f$ is the last rung of a $3$-tight caterpillar ladder of $\cN$, then $\cN$ is an octopus, and so we may assume that $f$ is the last rung of a $2$-tight caterpillar ladder of $\cN$ with reticulations $v_1$ and $v_2$, and core arcs 
\[ \{ (u_2',u_1'), (u_1',u_2), (u_2,u_1), (u_1',v_1), (u_1,v_1), (u_2',v_2), (u_2,v_2) \}. \]
Note that $f = (u, v) = (u_2', v_2)$. Let $v_3$ be the third reticulation of $\cN$ and let $g_1 = (u_3,v_3)$ and $g_2=(u'_3,v_3)$  denote the reticulation arcs of $\cN$ directed into $v_3$. For each $i\in \{1, 2, 3\}$, let $m_i$ denote a leaf at the end of a tree path starting at $v_i$. Furthermore, let $a$, $b$, and $c$ denote a leaf at the end of a tree path starting at $u_1$, $u_3$, and $u'_3$, respectively. Note that $m_1 \neq m_2 \neq m_3$, $a \neq m_1$, $a \neq m_2$, $b \neq m_3$, and $c \neq m_3$. If $a = m_3$, then there is a tree path from $v_3$ to $u_2'$ to $a$ in $\cN$, contradicting the choice of $f$. Thus $a \neq m_3$. In summary,
\begin{align*}
a \neq m_1 \neq m_2 \neq m_3, \, b \neq m_3, \text{ and } c \neq m_3.
\end{align*}

We now consider two subcases:
\begin{enumerate}[{\rm (i)}]
\item Assume that $b\neq c$. By making the appropriate choices of reticulation arcs incident with $v_1$, $v_2$, and $v_3$, it is easily seen that $\cN$ displays phylogenetic trees $\cT_1$, $\cT_2$, $\cT_3$, and $\cT_4$ such that
\begin{align*}
\cT_1 | \{a, m_1, m_2\} &\cong (a, m_1, m_2) \text{ and } \cT_1 | \{b, c, m_3\} \cong (m_3, b, c), \\
\cT_2 | \{a, m_1, m_2\} &\cong (a, m_1, m_2) \text{ and } \cT_2 | \{b, c, m_3\} \cong (m_3, c, b), \\
\cT_3 | \{a, m_1, m_2\} &\cong (a, m_2, m_1) \text{ and } \cT_3 | \{b, c, m_3\} \cong (m_3, b, c), \\
\cT_4 | \{a, m_1, m_2\} &\cong (a, m_2, m_1) \text{ and } \cT_4 | \{b, c, m_3\} \cong (m_3, c, b).
\end{align*}
Since $b\neq c$, we have that $\cT_1$, $\cT_2$, $\cT_3$, and $\cT_4$ are distinct, and so $|T(\cN)|\geq 4$, a contradiction. 

\item On the other hand, if $b = c$, then one of $g_1$ and $g_2$, without loss of generality say $g_2$, is a shortcut. As $\cN$ has no $3$-cycles, the vertex $u'_3$ has a child $w$ that is neither $u_3$ nor $v_3$. Let $\ell$ be a leaf at the end of a tree path that avoids $u_3$ and either starts at $w$ if $w$ is not a parent of a reticulation or starts at such a reticulation. Observe that $\ell\not\in \{b, m_3\}$. By making the appropriate choices of reticulation arcs incident with $v_1$, $v_2$, and $v_3$, it is again easily seen that $\cN$ displays phylogenetic trees $\cT_1'$, $\cT'_2$, $\cT'_3$ and $\cT_4'$ such that
\begin{align*}
\cT_1' | \{a, m_1, m_2\} &\cong (a, m_1, m_2) \text{ and } \cT_1' | \{b,\ell, m_3\} \cong (m_3, b, \ell), \\
\cT_2' | \{a, m_1, m_2\} &\cong (a, m_1, m_2) \text{ and } \cT_2' | \{b, \ell, m_3\} \cong (m_3, \ell, b), \\
\cT_3' | \{a, m_1, m_2\} &\cong (a, m_2, m_1) \text{ and } \cT_3' | \{b, \ell, m_3\} \cong (m_3, b, \ell), \\
\cT_4' | \{a, m_1, m_2\} &\cong (a, m_2, m_1) \text{ and } \cT_4' | \{b, \ell, m_3\} \cong (m_3, \ell, b).
\end{align*}
Since $b$, $\ell$, and $m_3$ are distinct, it follows that $|T(\cN)| \geq 4$, another contradiction. 
\end{enumerate}
Hence if $|T(\cN\ba f)| = |T(\cN)|=3$, then $\cN$ is an octopus.

\item If $|T(\cN\ba f)| = 2$, then, by the previous base case, $\cN\ba f$ is an octopus. In particular, $\cN \ba f$ contains a $2$-tight caterpillar ladder, say $\mathcal{N}'$. 

Assume that $\cN'$ is not a $2$-tight caterpillar ladder of $\cN$. Then $f=(u,v)$ is incident with one of the core arcs of $\cN'$ in $\cN$. If the head $v$ of $f$ is incident with such an arc, then $\cN$ is not tree-child, a contradiction. If instead the tail $u$ of $f$ is incident with one of the core arcs of $\cN'$ in $\cN$, then this contradicts the choice of $f$. So $\cN'$ is a $2$-tight caterpillar ladder of $\cN$ and $f$ is not a core arc of this ladder.

Let $v_1$ and $v_2$ denote the reticulations of the $2$-tight caterpillar ladder $\cN'$ of $\cN$, and assume that its core arcs are given by
\[ \{ (u_2',u_1'), (u_1',u_2), (u_2,u_1), (u_1',v_1), (u_1,v_1), (u_2',v_2), (u_2,v_2) \}. \]
Furthermore, let $v_3$ be the third reticulation of $\cN$ and let $g_1 = (u_3,v_3)$ and $g_2=(u'_3,v_3)$  denote the reticulation arcs of $\cN$ directed into $v_3$. For each $i\in \{1, 2, 3\}$, let $m_i$ denote a leaf at the end of a tree path starting at $v_i$,  and let $a$, $b$, and $c$ denote a leaf at the end of a tree path starting at $u_1$, $u_3$, and $u'_3$, respectively.

If $a \neq m_3$, we can apply the same arguments as in Cases~(a)(i) and~(a)(ii) to conclude that $|T(\cN)| \geq 4$, a contradiction. So we may assume that $a = m_3$, in which case, by the choice of $f$, there is a tree path from $v_3$ to $u_2'$ to $a$ in $\cN$. We now make a few observations. First, $m_1 \neq m_2 \neq a$. Second, if $b = m_1$, the path from $v_1$ to $m_1$ contains a reticulation, contradicting the fact that it is a tree path. Therefore $b \neq m_1$ and, similarly, $b \neq m_2$, $b\neq m_3$, $c \neq m_1$, $c \neq m_2$, and $c_3\neq m_3$. In summary,
\begin{align*}
m_1 \neq m_2 \neq m_3, \, b\not\in \{m_1, m_2, m_3\}, \, c\not\in \{m_1, m_2, m_3\}, \text{ and } a = m_3.
\end{align*}
As in Case~(a), we consider two subcases:
\begin{enumerate}[{\rm (i)}]
\item If $b\neq c$, then, as $b, c\notin \{a, m_1, m_2\}$, it follows as in Case~(a)(i) that $|T(\cN)|\geq 4$, a contradiction.

\item On the other hand, if $b = c$, then, one of $g_1$ and $g_2$, without loss of generality say $g_2$, is a shortcut. Since $\cN$ has no $3$-cycles, the vertex $u'_3$ has a child $w$ that is neither $u_3$ nor $v_3$. Let $\ell$ be a leaf at the end of a tree path avoiding $u'_2$ and either starting at $w$ if $w$ is not the parent of a reticulation or starting at such a reticulation. Observe that such a leaf exists and $\ell\not\in \{b, m_3\}$. By the placement of $v_3$, we have $\ell \notin \{a, m_1, m_2\}$. It is now easily checked that $\cN$ displays phylogenetic trees $\cT_1''$, $\cT''_2$, $\cT''_3$ and $\cT_4''$ such that
\begin{align*}
\cT_1'' | \{a, m_1, m_2\} &\cong (a, m_1, m_2) \text{ and } \cT_1'' | \{a, b,\ell\} \cong (a, b, \ell), \\
\cT_2'' | \{a, m_1, m_2\} &\cong (a, m_1, m_2) \text{ and } \cT_2'' | \{a, b, \ell\} \cong (b, \ell, a), \\
\cT_3'' | \{a, m_1, m_2\} &\cong (a, m_2, m_1) \text{ and } \cT_3'' | \{a, b, \ell\} \cong (a, b, \ell), \\
\cT_4'' | \{a, m_1, m_2\} &\cong (a, m_2, m_1) \text{ and } \cT_4'' | \{a, b, \ell\} \cong (b, \ell, a).
\end{align*}
Since $a$, $b$, and $\ell$ are distinct, it follows that $|T(\cN)| \geq 4$, a contradiction. 
\end{enumerate}
Hence $|T(\cN \ba f)|\neq 2$.
\end{enumerate}
This completes the proof of the lemma.
\end{proof}

\begin{proof}[Proof of Theorem~\ref{main}]
Without loss of generality, we may assume that, amongst all tree-child networks with $n$ leaves, $k$ reticulations, and no $3$-cycles, $|T(\cN)|$ is minimised, in which case, by Lemma~\ref{octopus}, $|T(\cN)|\le t(n, k)$. By Lemma~\ref{base}, the theorem holds for $k\in \{0, 1, 2, 3\}$. Now suppose that $k\ge 4$, and so $n\ge 5$, and the theorem holds for all tree-child networks with $n$ leaves, at most $k-1$ reticulations, and no $3$-cycles.

Let $p_B$ be a tree vertex of $\cN$ that is a parent of a reticulation, say $p_A$, so that, amongst all such tree vertices, $p_B$ has maximum distance to the root. Let $A$ and $B$ denote the leaf sets of the pendant subtrees of $\cN$ obtained by deleting the outgoing arc of $p_A$ and the outgoing arc of $p_B$ that is not $(p_B, p_A)$, respectively. By maximality, $A$ and $B$ are well defined. Let $a\in A$ and $b\in B$, and let $e_1$ and $e_2$ denote the reticulation arcs of $\cN$ directed into $p_A$, where $e_1=(p_B, p_A)$. Note that $e_1$ is not a shortcut. Let $q_A$ denote the parent of $p_A$ that is not $p_B$, that is $e_2=(q_A, p_A)$. By Lemma~\ref{normal} and the minimality and induction assumptions, $p_A$ is not normal and so $e_2$ is a shortcut. With this setup, the remainder of the proof is to show that $e_1$ is the first rung of a $2$-tight or $3$-tight caterpillar ladder of $\cN$, and then use induction to show that $\cN$ is an octopus.

Let $P_u=q_A, u_1, u_2, \ldots, u_r, p_B$ be a directed path from $q_A$ to $p_B$ in $\cN$. Since $\cN$ has no $3$-cycles, we have $r\ge 1$. We next show that, for all $i\in \{1, 2, \ldots, r\}$, the vertex $u_i$ is a tree vertex and the parent of a reticulation. Note that this will imply that $P_u$ is the unique directed path from $q_A$ to $p_B$. In fact, we eventually show that $r=1$.

Consider $u_1$. Since $\cN$ is tree-child, $u_1$ is not a reticulation. Assume that there is a tree path from $u_1$ to a leaf $\ell$ avoiding $p_B$. Let $\cT\in T(\cN)$. Then $\cT$ uses $e_1$ if and only if $\cT|\{a, b, \ell\}\cong (a, b, \ell)$, and $\cT$ uses $e_2$ if and only if either $\cT|\{a, b, \ell\}\cong (a, \ell, b)$ or $\cT|\{a, b, \ell\}\cong (b, \ell, a)$. In particular, if $\cT_1$, $\cT_2\in T(\cN)$, and $\cT_1$ uses $e_1$ and $\cT_2$ uses $e_2$, then $\cT_1$ and $\cT_2$ are not isomorphic (an argument that we use repeatedly throughout the proof). Thus
$$|T(\cN)| = |T(\cN\ba e_2)| + |T(\cN\ba e_1)|.$$
If $\cN\ba e_2$ has a $3$-cycle and $f$ is a reticulation arc of this $3$-cycle, then, by Lemma~\ref{no3cycles}, $\cN\ba \{e_2, f\}$ has no $3$-cycles. Therefore, as $\cN\ba e_1$ has no $3$-cycle, it follows by induction and Lemma~\ref{tnk}(v) that
$$|T(\cN)|\ge t(n, k-1) + t(n, k-2) > t(n, k),$$
a contradiction to the minimality of $|T(\cN)|$. Thus there is no such tree path from $u_1$. Instead, as $\cN$ is tree-child, all tree paths from $u_1$ to a leaf traverse $p_B$. This implies that $P_u$ has no reticulations and, for all $i\in \{1, 2, \ldots, r\}$, the vertex $u_i$ is a parent of a reticulation, $v_i$ say. For each $i\in \{1, 2, \ldots, r\}$, let $m_i$ denote a leaf at the end of a tree path starting at $v_i$ and observe that $m_i\not\in A\cup B$. Furthermore, let $u'_i$ denote the second parent of $v_i$, and let $f_i=(u_i, v_i)$ and $f'_i=(u'_i, v_i)$. By Lemma~\ref{normal} and the minimality and induction assumptions, $v_i$ is not normal for all $i$, and so either $f_i$ or $f'_i$ is a shortcut.

\begin{sublemma}
For all $i\in \{1, 2, \ldots, r\}$, the arc $f'_i$ is a shortcut and there is a directed path from $u'_i$ to $q_A$.
\label{path1}
\end{sublemma}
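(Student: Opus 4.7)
The plan is to prove the sublemma by induction on $i\in\{1,\ldots,r\}$, handling each index in two stages. At each $i$, since $v_i$ is not normal (by Lemma~\ref{normal} together with the minimality of $|T(\cN)|$ and the induction hypothesis of Theorem~\ref{main}), exactly one of $f_i,f'_i$ is a shortcut. Stage~1 establishes that it must be $f'_i$, and Stage~2 constructs the directed path from $u'_i$ to $q_A$.

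For Stage~1, I would suppose for contradiction that $f_i$ is the shortcut. The alternate directed path from $u_i$ to $v_i$ then leaves $u_i$ through its non-$v_i$ child (namely $u_{i+1}$ if $i<r$, or $p_B$ if $i=r$) and enters $v_i$ through its other parent $u'_i$, so $u'_i$ is a strict descendant of this non-$v_i$ child. In the case $i=r$, this places $u'_r$ strictly below $p_B$; since in a DAG a strict descendant has strictly greater maximum distance from the root than its ancestor, and $u'_r$ is itself a tree vertex that is the parent of the reticulation $v_r$, this contradicts the maximality in the choice of $p_B$. In the case $i<r$, the maximality argument does not directly apply, because $u_{i+1}$ has strictly smaller depth than $p_B$, so the contradiction has to be drawn from a finer combination: using that $u_{i+1}$ is a tree vertex with reticulation child $v_{i+1}$ and that every tree path from $u_{i+1}$ traverses $p_B$, together with the tree-child property and the no 3-cycles hypothesis, I would identify suitable reticulation arcs whose removal (via Lemmas~\ref{deletion} and~\ref{no3cycles}) produces tree-child networks with no 3-cycles and strictly fewer reticulations, and then apply the induction hypothesis of Theorem~\ref{main} together with the arithmetic identities of Lemma~\ref{tnk} to conclude $|T(\cN)|>t(n,k)$, contradicting minimality.

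For Stage~2, once $f'_i$ is known to be a shortcut, there is a directed path $P$ from $u'_i$ to $v_i$ in $\cN$ distinct from $f'_i$, which must enter $v_i$ through its other parent $u_i$; hence $u_i$ is a descendant of $u'_i$ along $P$. Each $u_j$ for $j\in\{1,\ldots,r\}$ is a tree vertex of in-degree one whose unique parent in $\cN$ is $u_{j-1}$ (with the convention $u_0:=q_A$), so the terminal portion of $P$ must be the chain $q_A\to u_1\to\cdots\to u_i$. Consequently $P$ passes through $q_A$, and its prefix from $u'_i$ up to $q_A$ is the required directed path. The main obstacle is the $i<r$ case of Stage~1: since the maximality of $p_B$ no longer directly yields a contradiction, one must argue via the delicate interplay of the tree-child structure, the absence of 3-cycles, and a tree-count bound obtained from Lemma~\ref{tnk} and the induction hypothesis of the main theorem.
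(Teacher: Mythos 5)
Your proposal has a genuine gap, and it sits exactly where the paper's proof does all of its work. The dangerous configuration is not ``$f_i$ is a shortcut with $u'_i$ hanging somewhere below'' but rather the case where $v_i$'s second parent $u'_i$ is itself a vertex $u_j$ of $P_u$ (equivalently $v_i=v_j$ for some $j\neq i$, so that both parents of this reticulation lie on $P_u$). In that configuration $f'_i$ \emph{is} a shortcut, so your Stage~1 conclusion holds trivially, but the second assertion of the sublemma fails: the alternate path into $v_i$ is $u_j\to u_{j+1}\to\cdots\to u_i\to v_i$, and your Stage~2 backward walk from $u_i$ along unique parents terminates at $u'_i=u_j$ without ever reaching $q_A$. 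Your Stage~2 silently assumes this cannot happen. The paper excludes precisely this configuration by a concrete counting argument that your Stage~1 ($i<r$) only promises to find: if both $u_i$ and $u'_i$ lie on $P_u$, then every $\cT\in T(\cN)$ using $e_1$ has $\cT|\{a,b,m_i\}\cong(a,b,m_i)$ while every $\cT$ using $e_2$ has $\cT|\{a,b,m_i\}\cong(b,m_i,a)$, whence $|T(\cN)|\ge|T(\cN\ba e_2)|+|T(\cN\ba e_1)|\ge t(n,k-2)+t(n,k-1)>t(n,k)$ by Lemma~\ref{tnk}(v) and Lemma~\ref{no3cycles}, contradicting minimality. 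Note the relevant arcs are $e_1$ and $e_2$ (into $p_A$) and the distinguishing leaves are $a$, $b$, $m_i$; your sketch of ``identify suitable reticulation arcs and apply Lemma~\ref{tnk}'' names neither, and this is the entire substance of the sublemma's proof rather than a routine detail.

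Two smaller points. First, your $i=r$ maximality observation is fine as far as it goes (and the pendant-ness of the subtrees below $p_A$ and $p_B$ gives the same conclusion), but the paper does not need a separate case for $i=r$: the single counting argument above handles every $i$ once one phrases the bad case as ``$u'_i$ lies on $P_u$'' rather than ``$f_i$ is the shortcut.'' Second, your Stage~2 reasoning (the path into $v_i$ must end $\cdots\to u_{i-1}\to u_i\to v_i$ and hence pass through $q_A$) is correct and matches the paper's implicit argument, but only \emph{after} the configuration $u'_i\in\{u_1,\ldots,u_{i-1}\}$ has been ruled out; as written it is circular on that point.
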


Assume that, for some $i$, both $u_i$ and $u'_i$ lie on $P_u$. Let $\cT\in T(\cN)$. If $\cT$ uses $e_1$, then $\cT|\{a, b, m_i\}\cong (a, b, m_i)$. But, if $\cT$ uses $e_2$, then $\cT|\{a, b, m_i\}\cong (b, m_i, a)$. Therefore
$$|T(\cN)|\ge |T(\cN\ba e_2)| + |T(\cN\ba e_1)|.$$
Noting that $\cN\ba e_2$ may contain a $3$-cycle, in which case, by Lemma~\ref{no3cycles}, deleting one further arc results in a tree-child network with no $3$-cycles, it follows by induction and Lemma~\ref{tnk}(v) that
$$|T(\cN)|\ge t(n, k-2) + t(n, k-1) > t(n, k).$$
Hence, for all $i\in \{1, 2, \ldots, r\}$, the vertex $u'_i$ does not lie on $P_u$ and so, as $v_i$ is not normal, $(u'_i, v_i)$ is a shortcut and there is a directed path from $u'_i$ to $q_A$ for all $i$. This proves (\ref{path1}).

\begin{sublemma}
$r=1$.
\label{r=1}
\end{sublemma}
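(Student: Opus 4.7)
I will prove the sublemma by contradiction: assume $r \ge 2$. The plan is to exhibit a collection of pairwise non-isomorphic rooted caterpillars that arise as $\cT|\{a, b, m_1, m_2\}$ for disjoint families of trees in $T(\cN)$, and use these to force $|T(\cN)| > t(n, k)$, contradicting the minimality of $\cN$.

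By (\ref{path1}), each $f'_i$ is a shortcut whose tail $u'_i$ lies strictly above $q_A$. Hence in any displayed tree of $\cN$, choosing $f_i$ at $v_i$ places $m_i$ below $u_i$ on the spine $P_u$, whereas choosing $f'_i$ places $m_i$ strictly above $q_A$; similarly, choosing $e_1$ at $p_A$ places $a$ below $p_B$ (forming a cluster with $b$), whereas $e_2$ makes $a$ a direct child of $q_A$. A direct calculation then shows that the five prefixes
\[
(e_1,f_1,f_2),\ \ (e_1,f_1,f'_2),\ \ (e_2,f_1,f_2),\ \ (e_2,f_1,f'_2),\ \ (e_2,f'_1,f_2)
\]
give caterpillar restrictions on $\{a,b,m_1,m_2\}$ with cherries $\{a,b\}, \{a,b\}, \{m_2,b\}, \{m_1,b\}, \{m_2,b\}$ and second-level leaves $m_2, m_1, m_1, a, a$ respectively. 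These five rooted caterpillars are pairwise non-isomorphic, so the corresponding five families of trees in $T(\cN)$ are pairwise disjoint.

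For each prefix, the trees realising it correspond bijectively to the trees displayed by the network $\cN_i$ obtained from $\cN$ by deleting the three unused reticulation arcs; $\cN_i$ is tree-child by Lemma~\ref{deletion} and has $k-3$ reticulations. Iteratively applying Lemma~\ref{no3cycles} to eliminate any $3$-cycle created along the way yields a tree-child network $\cN'_i$ with no $3$-cycles and at least $k-4$ reticulations, so by the induction hypothesis $|T(\cN_i)| \ge |T(\cN'_i)| \ge t(n, k-4)$. Summing over the five disjoint families gives
\[
|T(\cN)|\ \ge\ 5 \cdot t(n,k-4)\ =\ \tfrac{5}{4}\,t(n,k)\ >\ t(n,k),
\]
the equality being immediate from the closed form of $t(n,\cdot)$ for $k \ge 4$. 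This contradicts the minimality of $|T(\cN)|$, forcing $r \le 1$; since $r \ge 1$ (because $\cN$ has no $3$-cycles, so $P_u$ must contain at least one intermediate vertex), we conclude $r = 1$.

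The main obstacle is controlling the number of auxiliary deletions forced by Lemma~\ref{no3cycles}: to keep each $\cN'_i$ on $\ge k-4$ reticulations, I need to show that across the three primary arc deletions defining each $\cN_i$, a single auxiliary deletion always suffices. This reduces to a case analysis of the ``dangerous'' shortcut arcs among $\{e_2, f'_1, f'_2\}$, where the structural information from (\ref{path1}) — namely that the shortcut tails $u'_1, u'_2$ lie strictly above $q_A$ — is used to rule out compound $3$-cycles at disjoint reticulations.
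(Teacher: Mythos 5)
Your five-family decomposition is a legitimate variant of the paper's argument (the paper instead uses three disjoint families: all trees using $e_1$, counted via $\cN\ba e_2$ which retains $k-1$ or $k-2$ reticulations, plus the two families using $\{e_2,f_1,f'_2\}$ and $\{e_2,f'_1,f_2\}$), and your claimed restrictions to $\{a,b,m_1,m_2\}$ and their pairwise non-isomorphism check out. But the step you yourself flag as ``the main obstacle'' is a genuine gap, not a deferred routine detail. Lemma~\ref{no3cycles} controls the $3$-cycle created by deleting \emph{one} reticulation arc from a network with no $3$-cycles; you are deleting three, and each of the suppressions of $u'_1$ and $u'_2$ (the tails of the shortcuts $f'_1$, $f'_2$) can independently create a $3$-cycle at a reticulation disjoint from $\{p_A,v_1,v_2\}$, so iterating the lemma can in principle cost up to one extra reticulation \emph{per} deletion. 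If a family degrades to $t(n,k-6)$, your count gives only $5\cdot t(n,k-6)$, and since $t(n,k)=4\cdot t(n,k-4)\ge \frac{64}{9}\, t(n,k-6)$, the inequality $5\cdot t(n,k-6)>t(n,k)$ fails. So the claim that ``a single auxiliary deletion always suffices'' is exactly what carries the proof, and you have not established it. The paper sidesteps this by keeping one large family ($\cN\ba e_2$) and by pinning down concretely \emph{which} $3$-cycle can arise: deleting $e_2$ creates a $3$-cycle only if $u'_1$ is the parent of $q_A$, in which case that $3$-cycle contains $f_1$ and $f'_1$ and is destroyed by the arcs already being deleted. You would need an analogous structural identification for the $3$-cycles arising from suppressing $u'_1$ and $u'_2$, and no such identification is available from (\ref{path1}) alone.

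Two smaller points. First, your equality $5\cdot t(n,k-4)=\tfrac54\, t(n,k)$ is false for $k=5$: $t(n,5)=6$ while $4\cdot t(n,1)=8$; the needed strict inequality $5\cdot t(n,1)=10>6$ still holds, but you should state it as an inequality and check the small cases $k\in\{4,5\}$ separately, as the paper does throughout. Second, note that your worst case is not symmetric across the five families: only the two families whose complements contain both $f'_1$ and $f'_2$ risk two auxiliary deletions, so a sharper accounting ($3\cdot t(n,k-4)+2\cdot t(n,k-5)$) would in fact suffice numerically --- but this still requires proving that each single suppression creates at most one $3$-cycle and that the $3$-cycles created by distinct suppressions do not interact (e.g.\ when $u'_1$ and $u'_2$ are adjacent), which again is the case analysis you have omitted.
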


Assume that $r\ge 2$. Let $\cT\in T(\cN)$. If $\cT$ uses $e_1$, then $A\cup B$ is a cluster of $\cT$. Also, if $\cT$ uses $\{e_2, f_1, f'_2\}$ or $\{e_2, f'_1, f_2\}$, then $\cT|\{a, b, m_1, m_2\}\cong (b, m_1, a, m_2)$ and $\cT|\{a, b, m_1, m_2\}\cong (b, m_2, a, m_1)$, respectively. Observing that if $\cT|\{a, b, m_1, m_2\}$ is isomorphic to either $(b, m_1, a, m_2)$ or $(b, m_2, a, m_1)$, then $A\cup B$ is not a cluster of $\cT$, it follows that
$$|T(\cN)|\ge |T(\cN\ba e_2)| + |T(\cN\ba \{e_1, f'_1, f_2\})| + |T(\cN\ba \{e_1, f_1, f'_2\})|.$$
Say $\cN\ba e_2$ has no $3$-cycle. If $k\ge 6$, then, as $\cN\ba \{e_1, f_1\}$ and $\cN\ba \{e_1, f_2\}$ have no $3$-cycles, it follows by induction and Lemmas~\ref{tnk}(v) and~\ref{no3cycles} that
\begin{align*}
|T(\cN)| & \ge t(n, k-1) + t(n, k-4) + t(n, k-4) \\
& = t(n, k-1) + t(n, k-2) \\
& > t(n, k).
\end{align*}
Furthermore, if $k=4$, then
$$|T(\cN)|\ge t(n, 3) + t(n, 0) + t(n, 0) = 3 + 1 + 1 > 4 = t(n, 4)$$
while, if $k=5$, then
$$|T(\cN)|\ge t(n, 4) + t(n, 1) + t(n, 1) = 4 + 2 + 2 > 6 = t(n, 5).$$
On the other hand, if $\cN\ba e_2$ has a $3$-cycle, then, by construction, this $3$-cycle contains the reticulation arcs $f_1$ and $f'_1$, and so $\cN\ba f'_1$ has no $3$-cycles. Thus, if $k\ge 6$, then, by induction and Lemmas~\ref{tnk}(v) and~\ref{no3cycles},
\begin{align*}
|T(\cN)| & \ge t(n, k-2) + t(n, k-3) + t(n, k-4) \\
& > t(n, k-2) + t(n, k-2) \\
& = t(n, k).
\end{align*}
Also, if $k=4$, then
$$|T(\cN)|\ge t(n, 2) + t(n, 1) + t(n, 0) = 2 + 2 + 1 > 4 =t(n, 4)$$
while, if $k=5$, then
$$|T(\cN)|\ge t(n, 3) + t(n, 2) + t(n, 1) = 3 + 2 + 2 > 6 = t(n, 5).$$
These contradictions to the minimality of $|T(\cN)|$ imply that $r=1$, thereby proving (\ref{r=1}).

To simplify notation, set $u=u_1$, $u'=u'_1$, $v=v_1$, $m_u=m_1$, $f_u=f_1$, and $f'_u=f'_1$. 

\begin{sublemma}
If $(u', q_A)$ is an arc of $\cN$, then $\cN$ is an octopus.
\label{oct1}
\end{sublemma}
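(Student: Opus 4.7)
The hypothesis that $(u', q_A)$ is an arc of $\cN$, combined with the structure already established, exhibits the core of a $2$-tight caterpillar ladder $\cN'$ of $\cN$ on the six vertices $\{u', q_A, u, p_B, v, p_A\}$ with the seven arcs $(u', q_A), (q_A, u), (u, p_B), (u', v), (u, v), (q_A, p_A), (p_B, p_A)$. Under the identification $u'_2 = u'$, $u'_1 = q_A$, $u_2 = u$, $u_1 = p_B$, $v_2 = v$, $v_1 = p_A$, the first, second, third, and last rungs of $\cN'$ are $e_1$, $f_u$, $e_2$, and $f'_u$, respectively.

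Mimicking the reduction used in the proof of Lemma~\ref{octopus}, let $\cN_1 = \cN\ba\{e_2, f'_u\}$ (delete the third and last rungs of $\cN'$) and $\cN_2 = \cN\ba\{e_1, f'_u\}$ (delete the first and last rungs). Each $\cN_i$ is tree-child by two applications of Lemma~\ref{deletion} and has $n$ leaves and $k-2\ge 2$ reticulations, since $p_A$ and $v$ each lose one of their two incoming reticulation arcs and are subsequently suppressed. Granting for the moment that $\cN_1$ and $\cN_2$ have no $3$-cycles, I observe that $\cN'$ admits exactly two distinct topologies on the three leaf-classes $A$, $B$, and the leaf set $V$ of the pendant subtree below $v$---either $A\cup B$ is a cluster (captured by $\cN_1$) or $B\cup V$ is a cluster (captured by $\cN_2$)---so $T(\cN) = T(\cN_1)\sqcup T(\cN_2)$. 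The induction hypothesis then gives $|T(\cN_i)|\ge t(n, k-2)$, and Lemma~\ref{tnk}(v) yields
\[|T(\cN)| = |T(\cN_1)| + |T(\cN_2)| \ge 2\,t(n, k-2) = t(n, k).\]
Since $|T(\cN)|\le t(n, k)$ by the minimality assumption on $\cN$, equality holds throughout, so $|T(\cN_i)| = t(n, k-2)$ and both $\cN_i$ are octopuses by the equality case of the induction hypothesis. Finally, the attachments of $\cN'$ to the rest of $\cN$ (the arc into $u'$ from above, together with the outgoing arcs from $p_A$, $p_B$, and $v$) are bridges, so $\cN'$ is a non-trivial $2$-connected component of $\cN$, and the non-trivial $2$-connected components of $\cN$ are exactly those of $\cN_1$ together with $\cN'$. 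Because $\cN'$ is the core of a $2$-tight caterpillar ladder, $\cN_1$ is an octopus, and $k$ and $k-2$ share parity, $\cN$ is an octopus.

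The main obstacle will be verifying that $\cN_1$ and $\cN_2$ have no $3$-cycles. The deletions and subsequent suppressions create new arcs in each $\cN_i$---for example $(u'', u)$, $(u, v^{\ast})$, and $(p_B, a^{\ast})$ in $\cN_1$, where $u''$ denotes the parent of $u'$ in $\cN$ and $v^{\ast}, a^{\ast}$ the unique children of $v$ and $p_A$---and one must rule out that any of them participates in a new $3$-cycle. The key leverage is the maximality of $p_B$ among tree-vertex parents of reticulations: this forces every tree-vertex descendant of $p_A$, $p_B$, or of the tree path from $v$ to have no reticulation child, and therefore the pendant subnetworks rooted at $a^{\ast}$, $v^{\ast}$, and the non-$p_A$ child of $p_B$ are reticulation-free phylogenetic trees into which no reticulation of $\cN$ has a parent. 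Any putative new $3$-cycle in $\cN_i$ would force one of the heads of the new arcs (a pendant-subtree root, or one of $u$, $p_B$, $q_A$) to be a reticulation or to share a reticulation child with its sibling in $\cN_i$---both excluded by the structural restriction above. Old arcs of $\cN$ alone cannot form a $3$-cycle since $\cN$ has none.
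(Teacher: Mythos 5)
Your proof is correct and follows essentially the same route as the paper: you form the same two networks $\cN\ba\{e_2,f'_u\}$ and $\cN\ba\{e_1,f'_u\}$, obtain $|T(\cN)|\ge 2\,t(n,k-2)=t(n,k)$ from the disjointness of the corresponding sets of displayed trees, and reconstruct the octopus from the equality case of the induction hypothesis together with the $2$-tight caterpillar ladder on $\{u',q_A,u,p_B,v,p_A\}$. The only (harmless) deviation is at the end: the paper invokes Lemma~\ref{tight} to deduce that $f'_u$ is non-essential and then factors $|T(\cN)|=2\,|T(\cN\ba\{f'_u,e_2\})|$, whereas you pin down $|T(\cN_1)|=|T(\cN_2)|=t(n,k-2)$ directly by squeezing the equality chain, which slightly streamlines the argument.
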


Assume that $(u', q_A)$ is an arc of $\cN$. Let $\cT\in T(\cN)$. If $\cT$ uses $\{e_1, f_u\}$, then $\cT|\{a, b, m_u\}\cong (a, b, m_u)$, while if $\cT$ uses $\{e_2, f_u\}$, then $\cT|\{a, b, m_u\}\cong (b, m_u, a)$. So
$$|T(\cN)|\ge |T(\cN\ba \{e_2, f'_u\})| + |T(\cN\ba \{e_1, f'_u\})|.$$
By induction and Lemma~\ref{tnk}(v) and~\ref{no3cycles},
$$|T(\cN)|\ge t(n, k-2) + t(n, k-2) = t(n, k).$$
Therefore, as $|T(\cN)|\le t(n, k)$,
\begin{align}
|T(\cN)| = 2\cdot t(n, k-2) = t(n, k).
\label{eqn1}
\end{align}
We now show that $\cN$ is an octopus. Since $\{u', q_A, u, p_B, p_A, v\}$ induces the core of a $2$-tight caterpillar ladder of $\cN$, it follows by Lemma~\ref{tight} that $f'_u$ is non-essential. Thus
$$|T(\cN)| = |T(\cN\ba f'_u)|.$$
In turn, it is now easily checked that
$$|T(\cN)| = 2|T(\cN\ba \{f'_u, e_2\})|$$
and so, by (\ref{eqn1}),
$$|T(\cN\ba \{f'_u, e_2\})| = t(n, k-2).$$
Therefore, by induction, $\cN\ba \{f'_u, e_2\}$ is an octopus, and so, by construction, $\cN$ is an octopus. This proves (\ref{oct1}).

Now assume that $(u', q_A)$ is not an arc of $\cN$. Let $P_t=u', t_1, t_2, \ldots, t_s, q_A$ be a directed path from $u'$ to $q_A$ in $\cN$. Similar to before, we will show that $s=1$ and $t_1$ is the parent of a reticulation. Consider $t_1$. Since $\cN$ is tree-child, $t_1$ is not a reticulation. Say that there is a tree path from $t_1$ to a leaf $\ell$ avoiding $q_A$. Let $\cT\in T(\cN)$. If $\cT$ uses $\{e_1, f_u\}$, then $\cT|\{a, b, m_u, \ell\}\cong (a, b, m_u, \ell)$. If $\cT$ uses either $\{e_2, f_u\}$ or $\{e_2, f'_u\}$, then $\cT|\{a, b, m_u, \ell\}\cong (b, m_u, a, \ell)$ and $\cT|\{a, b, m_u, \ell\}\in \big\{(a, b, \ell, m_u), \{(a, b), (m_u, \ell)\}\big\}$, respectively. So
$$|T(\cN)|\ge |T(\cN\ba \{e_2, f'_u\})| + |T(\cN\ba \{e_1, f'_u\})| + |T(\cN\ba \{e_1, f_u\})|.$$
Now $\cN\ba e_2$ has no $3$-cycle as $u'$ is not the parent of $q_A$. Therefore, if $k\ge 5$, it follows by induction and Lemmas~\ref{tnk}(v) and~\ref{no3cycles} that
\begin{align*}
|T(\cN)| & \ge t(n, k-3) + t(n, k-3) + t(n, k-2) \\
& = t(n, k-1) + t(n, k-2) \\
& > t(n, k),
\end{align*}
a contradiction. Furthermore, if $k=4$, then
$$|T(\cN)| \ge t(n, 1) + t(n, 1) + t(n, 2) = 2 + 2 + 2 > 4 = t(n, k),$$
another contradiction. Thus there is no such tree path from $t_1$, and so all tree paths from $t_1$ to a leaf traverse $q_A$. In turn, this implies that $P_t$ has no reticulations, and so $P_t$ consists of tree vertices and, for all $i\in \{1, 2, \ldots, s\}$, the vertex $t_i$ is a parent of a reticulation, $w_i$ say. For each $i\in \{1, 2, \ldots, s\}$, let $m_i$ denote the leaf at the end of a tree path starting at $w_i$. Also let $t'_i$ denote the second parent of $w_i$, and let $g_i=(t_i, w_i)$ and $g'_i=(t'_i, w_i)$. By Lemma~\ref{normal}, and the minimality and induction assumptions, either $g_i$ or $g'_i$ is a shortcut for all $i$.

\begin{sublemma}
For all $i\in \{1, 2, \ldots, s\}$, the arc $g'_i$ is a shortcut and there is a directed path from $t'_i$ to $u'$.
\label{path2}
\end{sublemma}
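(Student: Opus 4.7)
My plan is to mirror the proof of (\ref{path1}), with $P_t$, $u'$, $q_A$, $t_i$, $t'_i$, $w_i$, $g_i$, and $g'_i$ playing the respective roles of $P_u$, $q_A$, $p_B$, $u_i$, $u'_i$, $v_i$, $f_i$, and $f'_i$. The one genuine adjustment I need is that the triple $\{a,b,m_i\}$ can no longer distinguish the embedding using $e_1$ from the one using $e_2$, since $m_i$ now branches off $P_t$ strictly above $q_A$, and so in every embedding the leaves $a$ and $b$ lie below $m_i$ in the restriction $\cT|\{a, b, m_i\}$. To separate the cases I will instead track the quadruple $\{a, b, m_u, m_i\}$, exploiting the fact that the position of $m_u$ in an embedding varies with the choice of $f_u$ versus $f'_u$.

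Assuming for a contradiction that for some $i \in \{1, 2, \ldots, s\}$ both $t_i$ and $t'_i$ lie on $P_t$ and letting $m_i$ be a leaf at the end of a tree path starting at $w_i$, I will do a direct case analysis on the four choices of reticulation arcs used at $p_A$ and $v$: if $\cT$ uses $\{e_1, f_u\}$, then $\cT|\{a, b, m_u, m_i\} \cong (a, b, m_u, m_i)$; if $\cT$ uses $\{e_2, f_u\}$, then $\cT|\{a, b, m_u, m_i\} \cong (b, m_u, a, m_i)$; and if $\cT$ uses either $\{e_1, f'_u\}$ or $\{e_2, f'_u\}$, then $\cT|\{a, b, m_u, m_i\} \cong (a, b, m_i, m_u)$, because $m_u$ branches off at $u'$, which lies strictly above the branching point of $m_i$ on $P_t$, which is in turn strictly above the split of $a$ and $b$. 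These three caterpillars are pairwise non-isomorphic, yielding
\[
|T(\cN)| \;\ge\; |T(\cN \ba \{e_2, f'_u\})| + |T(\cN \ba \{e_1, f'_u\})| + |T(\cN \ba f_u)|.
\]

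I will then apply the induction hypothesis to each summand, invoking Lemma~\ref{no3cycles} to delete one further arc per summand should a $3$-cycle be introduced. A short calculation from Lemma~\ref{tnk} gives $2 t(n, k-3) > t(n, k-2)$ for $k \ge 4$, so the resulting lower bound $t(n, k-2) + 2 t(n, k-3)$ strictly exceeds $t(n, k) = 2 t(n, k-2)$, contradicting the minimality of $|T(\cN)|$. This forces $t'_i$ off $P_t$ for every $i$. Since $w_i$ is not normal (Lemma~\ref{normal} combined with the minimality and induction assumptions), $g'_i$ must be the shortcut, and the alternative directed path from $t'_i$ to $w_i$ then passes through $t_i$; since $t_i$ is only reachable from above via $P_t$ up to $u'$, the vertex $t'_i$ is a strict ancestor of $u'$, giving the required directed path.

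The step I expect to be the main obstacle is the $3$-cycle bookkeeping in the bound. Specifically, $\cN \ba f_u$ will produce the $3$-cycle on $\{q_A, p_B, p_A\}$ (the suppression of $u$ creates the arc $(q_A, p_B)$, which together with the retained $(p_B, p_A)$ and $(q_A, p_A)$ closes a triangle), and each two-arc deletion $\cN \ba \{e_i, f'_u\}$ may similarly trigger one further $3$-cycle; each such occurrence costs one reticulation in the induction, pushing the final bound from the naive $3\,t(n, k-2)$ down to $t(n, k-2) + 2 t(n, k-3)$, which still just clears $t(n, k)$.
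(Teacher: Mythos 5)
Your argument is correct and follows the paper's proof of (\ref{path2}) essentially verbatim: the same restriction to $\{a,b,m_u,m_i\}$ splits the displayed trees into the same three pairwise non-isomorphic classes, and the resulting bound $t(n,k-2)+2\cdot t(n,k-3)>t(n,k)$ is exactly the one the paper derives. The only (immaterial) difference is that you take $|T(\cN\ba f_u)|$ as the third summand where the paper takes $|T(\cN\ba \{e_1,f_u\})|$; your choice forces you to account for the $3$-cycle on $\{q_A,p_B,p_A\}$ via Lemma~\ref{no3cycles}, which you do correctly, and both choices yield the same $t(n,k-2)$ term.
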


Assume that, for some $i$, both $t_i$ and $t'_i$ lie on $P_t$. Let $\cT\in T(\cN)$. If $\cT$ uses $\{e_2, f'_u\}$, then $\cT|\{a, b, m_u, m_i\}\cong (a, b, m_i, m_u)$. Furthermore, if $\cT$ uses $\{e_1, f_u\}$, then $\cT|\{a, b, m_u, m_i\}\cong (a, b, m_u, m_i)$, while if $\cT$ uses $\{e_2, f_u\}$, then $T|\{a, b, m_u, m_i\}\cong (b, m_u, a, m_i)$. Thus
$$|T(\cN)|\ge |T(\cN\ba \{e_1, f_u\})| + |T(\cN\ba \{e_2, f'_u\})| + |T(\cN\ba \{e_1, f'_u\})|.$$
Noting that $N\ba f'_u$ may contain a $3$-cycle but $\cN\ba e_2$ does not contain a $3$-cycle, it follows by induction and Lemmas~\ref{tnk}(v) and~\ref{no3cycles} that, if $k\ge 5$, then
\begin{align*}
|T(\cN)| & \ge t(n, k-2) + t(n, k-3) + t(n, k-3) \\
& = t(n, k-2) + t(n, k-1) \\
& > t(n, k),
\end{align*}
a contradiction. Also, if $k=4$, then
$$|T(\cN)| \ge t(n, 2) + t(n, 1) + t(n, 1) = 2 + 2 + 2  > 4 =t(n, 4),$$
another contradiction. Thus, for all $i\in \{1, 2, \ldots, s\}$, the vertex $t'_i$ does not lie on $P_t$, and so $g'_i$ is a shortcut and  there is a directed path from $t'_i$ to $u'$ for all $i$. This proves (\ref{path2}).

\begin{sublemma}
$s=1$.
\label{s=1}
\end{sublemma}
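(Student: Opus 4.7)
The plan is to mimic the argument for Sublemma~\ref{r=1}: suppose $s\ge 2$ for contradiction, partition $T(\cN)$ into three disjoint classes distinguished by their restrictions to $\{a, b, m_u, m_1, m_2\}$, bound each by induction, and sum to exceed $t(n,k)$.

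For $\cT\in T(\cN)$, I consider $\cT|\{a, b, m_u, m_1, m_2\}$. If $\cT$ uses $e_1$, then $A\cup B$ is a cluster of $\cT$. If $\cT$ uses $\{e_2, f_u, g_1, g'_2\}$, then tracing the embedding shows that $a$ attaches at $q_A$ via the shortcut $e_2$, the leaves $b$ and $m_u$ form a cherry at $u$ (since $p_B$ is suppressed once $(p_B, p_A)$ is cut), $m_1$ peels off at $t_1$ below $u'$, and $m_2$ peels off at $t'_2$ above $u'$ by Sublemma~\ref{path2}; hence $\cT|\{a, b, m_u, m_1, m_2\}\cong (b, m_u, a, m_1, m_2)$. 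Symmetrically, if $\cT$ uses $\{e_2, f_u, g'_1, g_2\}$, then $m_1$ peels off above $u'$ while $m_2$ peels off below, giving $\cT|\{a, b, m_u, m_1, m_2\}\cong (b, m_u, a, m_2, m_1)$. These three classes are pairwise disjoint (the first has $\{a, b\}$ inside the $A\cup B$-cluster; the latter two have cherry $\{b, m_u\}$ but reverse the order of $m_1$ and $m_2$), so
$$|T(\cN)|\ge |T(\cN\ba e_2)| + |T(\cN\ba \{e_1, f'_u, g'_1, g_2\})| + |T(\cN\ba \{e_1, f'_u, g_1, g'_2\})|.$$

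I would then bound each term by induction with careful $3$-cycle bookkeeping. Since $e_2$ is a shortcut with alternative path $q_A\to u\to p_B\to p_A$, deleting $e_2$ and suppressing $q_A$ and $p_A$ creates no $3$-cycle, so $|T(\cN\ba e_2)|\ge t(n, k-1)$ by the induction hypothesis. For the other two terms, the only new $3$-cycle that can arise after deleting $e_1$ and $f'_u$ is the triangle $\{t'_1, t_1, w_1\}$ (formed precisely when $t'_1$ is the direct parent of $u'$, because suppressing $u'$ then places $t'_1$ adjacent to $t_1$); but this triangle's reticulation arcs are $g_1$ and $g'_1$, and the next deletion in each of our lists removes one of them, destroying the triangle at no extra cost. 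A possible further $3$-cycle produced by the last deletion is handled by one application of Lemma~\ref{no3cycles}, and combining with Lemma~\ref{tnk}(v) (which gives $2t(n, k-4) = t(n, k-2)$) yields
$$|T(\cN)|\ge t(n, k-1) + t(n, k-2) > t(n, k)$$
by another application of Lemma~\ref{tnk}(v). This contradicts the minimality of $|T(\cN)|$, so $s=1$.

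The main obstacle is the detailed $3$-cycle bookkeeping in the second and third terms: one must verify that every $3$-cycle triggered by the successive arc deletions either involves an arc already scheduled in the list or can be cleared by a single further application of Lemma~\ref{no3cycles}, so that the count drop is at most what Lemma~\ref{tnk}(v) can absorb. The argument is structurally identical to the analogous step in Sublemma~\ref{r=1}, merely applied to the additional reticulation $v$ sitting between the paths $P_u$ and $P_t$.
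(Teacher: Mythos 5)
Your decomposition is exactly the paper's inequality (its display (\ref{eqn3})): the three classes determined by whether a displayed tree uses $e_1$, uses $\{e_2,f_u,g_1,g'_2\}$, or uses $\{e_2,f_u,g'_1,g_2\}$, distinguished by the restriction to $\{a,b,m_u,m_1,m_2\}$, and counted by $|T(\cN\ba e_2)|$, $|T(\cN\ba\{e_1,f'_u,g'_1,g_2\})|$, and $|T(\cN\ba\{e_1,f'_u,g_1,g'_2\})|$. The gap is in your final arithmetic. You concede that the last deletion ($g_2$ or $g'_2$) may create a further $3$-cycle that must be cleared by one more application of Lemma~\ref{no3cycles}, i.e.\ by deleting one more reticulation arc; but that drops the corresponding term from $t(n,k-4)$ to $t(n,k-5)$, so your claimed bound $2\cdot t(n,k-4)=t(n,k-2)$ for the sum of the last two terms is not what your own argument delivers. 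This matters: if \emph{both} terms degraded to $t(n,k-5)$, the sum would be exactly $2\cdot t(n,k-5)=t(n,k-3)$, and for odd $k$ one has $t(n,k-1)+t(n,k-3)=t(n,k)$ by Lemma~\ref{tnk}(vi) --- no strict inequality, no contradiction. So the $3$-cycle bookkeeping you flag as ``the main obstacle'' is precisely where the proof lives, and it cannot be waved through.

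The paper resolves this by proving the bound $t(n,k-1)+t(n,k-4)+t(n,k-5)$ (only one of the two terms loses an extra reticulation: the $3$-cycle created by $\cN\ba\{e_1,f'_u\}$ necessarily contains $g_1$ and $g'_1$, so deleting either of them clears it at no cost by Lemma~\ref{no3cycles}), and then observes $t(n,k-4)+t(n,k-5)>2\cdot t(n,k-5)=t(n,k-3)$, which restores strictness. It also splits on parity: for even $k$ the cruder two-term bound $|T(\cN)|\ge|T(\cN\ba e_2)|+|T(\cN\ba\{e_1,f'_u\})|\ge t(n,k-1)+t(n,k-3)>t(n,k)$ already suffices by Lemma~\ref{tnk}(vi), and the three-term estimate is needed only for odd $k$. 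Finally, your uniform use of $t(n,k-4)$ and $t(n,k-5)$ breaks down for small $k$ (for $k=4$ the quantity $t(n,k-5)$ is undefined, and $k=5$ needs the explicit computation $t(n,4)+t(n,1)+t(n,0)>t(n,5)$); the paper treats these cases separately. To repair your write-up you must (a) replace $2\cdot t(n,k-4)$ by $t(n,k-4)+t(n,k-5)$ and justify that at most one extra deletion is incurred, (b) verify the resulting strict inequality via Lemma~\ref{tnk}(v) and (vi) for each parity, and (c) dispose of $k\in\{4,5\}$ by hand.
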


Assume that $s\ge 2$. Let $\cT\in T(\cN)$. If $\cT$ uses $e_1$, then $A\cup B$ is a cluster of $\cT$. Also, if $\cT$ uses $\{e_2, f_u\}$, then $\cT|\{a, b, m_u\}\cong (b, m_u, a)$. Furthermore, if $\cT$ uses $\{e_2, f_u, g_1, g'_2\}$ or $\{e_2, f_u, g'_1, g_2\}$, then $\cT|\{a, b, m_u, m_1, m_2\}\cong (b, m_u, a, m_1, m_2)$ and $\cT|\{a, b, m_u, m_1, m_2\}\cong (b, m_u, a, m_2, m_1)$, respectively. Therefore
\begin{align}
|T(\cN)| & \ge |T(\cN\ba e_2)| + |T(\cN\ba \{e_1, f'_u\})|
\label{eqn2}
\end{align}
and
\begin{align}
|T(\cN)|\ge |T(\cN\ba e_2)| + |T(\cN\ba \{e_1, f'_u, g'_1, g_2\})| + |T(\cN\ba \{e_1, f'_u, g_1, g'_2\}|.
\label{eqn3}
\end{align}
If $\cN\ba f'_u$ has a $3$-cycle, then, by construction, this $3$-cycle contains $g_1$ and $g'_1$, in which case, by Lemma~\ref{no3cycles}, neither $\cN\ba \{f'_u, g_1\}$ nor $\cN\ba \{f'_u, g'_1\}$ has a $3$-cycle. Moreover, none of $\cN\ba e_2$, $\cN\ba g_1$, and $\cN\ba g_2$ has a $3$-cycle. Therefore, if $k$ is even, then, by (\ref{eqn2}), induction, and Lemmas~\ref{tnk}(vi) and~\ref{no3cycles},
\begin{align*}
|T(\cN)| & \ge t(n, k-1) + t(n, k-3) \\
& > t(n, k),
\end{align*}
a contradiction. If $k\ge 7$ and odd, then, by (\ref{eqn3}), induction, Lemma~\ref{tnk}(v) and (vi), and Lemma~\ref{no3cycles},
\begin{align*}
|T(\cN)|& \ge t(n, k-1) + t(n, k-4) + t(n, k-5) \\
& > t(n, k-1) + t(n, k-3) \\
& = t(n, k),
\end{align*}
while if $k=5$, then
$$|T(\cN)| \ge t(n, 4) + t(n, 1) + t(n, 0) = 4 + 2 + 2 > 6 = t(n, 5).$$
These contradictions imply that $s=1$, thereby proving (\ref{s=1}).

To simplify (for the last time) the notation, set $t=t_1$, $t'=t'_1$, $w=w_1$, $m_t=m_1$, $g_t=g_1$, and $g'_t=g'_1$.

\begin{sublemma}
If $(t', u')$ is an arc of $\cN$, then $\cN$ is an octopus.
\label{oct2}
\end{sublemma}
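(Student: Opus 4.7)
The plan is first to recognise that the assumed arc $(t', u')$ completes, inside $\cN$, the core of a $3$-tight caterpillar ladder, and then to partition $T(\cN)$ by the restriction $\cT|\{a, b, m_u, m_t\}$. Combined with the arcs of the paths $P_u$ and $P_t$ and the six rungs $e_1, e_2, f_u, f'_u, g_t, g'_t$, the arc $(t', u')$ shows that the vertex set $\{t', u', t, q_A, u, p_B, w, v, p_A\}$ induces the core of a $3$-tight caterpillar ladder $\cL$ of $\cN$ in which $e_1$ is the first rung and $g'_t$ is the last rung.

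Proceeding as in the proof of Lemma~\ref{octopus} for a single $3$-tight ladder, I would enumerate the eight ways to choose one reticulation arc into each of $p_A, v, w$ and verify that for every $\cT\in T(\cN)$ the restriction $\cT|\{a, b, m_u, m_t\}$ is isomorphic to one of
\[(b, a, m_u, m_t),\quad (b, a, m_t, m_u),\quad (b, m_u, a, m_t),\]
realised respectively by the arc-choices $\{e_1, f_u, g_t\}$, $\{e_1, f'_u, g_t\}$, and $\{e_2, f_u, g_t\}$. Accordingly, set
\[\cN_1 = \cN\ba\{e_2, f'_u, g'_t\},\quad \cN_2 = \cN\ba\{e_2, f_u, g'_t\},\quad \cN_3 = \cN\ba\{e_1, f'_u, g'_t\},\]
each of which has $k-3$ reticulations. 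Using Lemma~\ref{no3cycles} to handle the transient $3$-cycle at $w$ that appears when $f'_u$ is deleted before $g'_t$ during the construction of $\cN_1$ and $\cN_3$, I would verify that each $\cN_i$ is tree-child with no $3$-cycles. Since any two ladder arc-choices producing the same restriction on $\{a, b, m_u, m_t\}$ yield the same phylogenetic tree (the differing attachment positions lie on a stretch of spine to which no other leaf attaches), I obtain
\[|T(\cN)| = |T(\cN_1)| + |T(\cN_2)| + |T(\cN_3)| \geq 3\cdot t(n, k-3),\]
where the inequality is the inductive hypothesis applied to each $\cN_i$.

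If $k$ is even, a direct calculation gives $3\cdot t(n, k-3) = \tfrac{9}{8}\cdot 2^{k/2} > t(n, k)$, contradicting the minimality assumption $|T(\cN)| \leq t(n, k)$. Hence $k$ must be odd, in which case Lemma~\ref{tnk}(vi) yields $3\cdot t(n, k-3) = t(n, k)$, so equality holds throughout and $|T(\cN_i)| = t(n, k-3)$ for each $i$. By the inductive hypothesis, each $\cN_i$ is then an octopus with $k-3$ (even) reticulations, so every non-trivial $2$-connected component of $\cN_i$ is the core of a $2$-tight caterpillar ladder. Since $\cN$ differs from each $\cN_i$ only inside the core of $\cL$, and this core is itself a non-trivial $2$-connected component of $\cN$, the non-trivial $2$-connected components of $\cN$ are exactly those of $\cN_i$ together with the core of $\cL$; hence $\cN$ has exactly one $3$-tight caterpillar ladder core and all other non-trivial $2$-connected components are $2$-tight, so $\cN$ is an octopus. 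The main obstacle I anticipate is the $3$-cycle bookkeeping needed to show each $\cN_i$ has no $3$-cycles, which requires a careful application of Lemma~\ref{no3cycles} for the transient $3$-cycles that appear mid-construction.
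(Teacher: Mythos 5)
Your proof is correct, but it takes a genuinely different route through the counting argument than the paper does. The paper first uses a two-way split, $|T(\cN)|\ge |T(\cN\ba e_2)| + |T(\cN\ba \{e_1, f'_u\})|\ge t(n,k-1)+t(n,k-3)$, kills the even case with Lemma~\ref{tnk}(vi), and only in the equality (odd) case invokes Lemma~\ref{tight} to declare $g'_t$ non-essential, whence $|T(\cN)| = 3\cdot|T(\cN\ba\{g'_t,f'_u,e_2\})|$ and the octopus conclusion follows by induction applied to a \emph{single} $(k-3)$-reticulation network. You instead transplant the three-way decomposition from the proof of Lemma~\ref{octopus} directly: your $\cN_1,\cN_2,\cN_3$ are exactly the three networks obtained by deleting the $\{3\text{rd},5\text{th},\text{last}\}$, $\{2\text{nd},3\text{rd},\text{last}\}$, and $\{1\text{st},5\text{th},\text{last}\}$ rungs, and your identification of the three possible restrictions to $\{a,b,m_u,m_t\}$ (using that nothing else attaches to the spine between $p_B$ and $t'$, which is guaranteed by the earlier steps $r=s=1$ and the absence of tree paths off $u_1$ and $t_1$) correctly establishes both disjointness and covering. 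This buys you independence from Lemma~\ref{tight} and a more uniform argument, at the cost of verifying the covering claim (that all eight ladder arc-choices collapse into three displayed-tree classes) and doing the $3$-cycle bookkeeping for three networks rather than one; your identification of the transient $3$-cycle at $w$ when $f'_u$ is deleted is the right one, and Lemma~\ref{no3cycles} disposes of it as you say. Two small points: your closed form $3\cdot t(n,k-3)=\tfrac{9}{8}2^{\sfrac{k}{2}}$ is wrong at $k=4$, where $t(n,1)=2$ rather than $\tfrac{3}{2\sqrt{2}}\sqrt{2}$, though the inequality $3\cdot t(n,1)=6>4=t(n,4)$ you need still holds; and your assertion that the core of $\cL$ is a full non-trivial $2$-connected component of $\cN$ is not known a priori --- it is best derived \emph{from} the fact that each $\cN_i$ is an octopus (no $2$-tight core of $\cN_i$ can pass through the collapsed spine vertices $t,u,p_B$, since their children there are not reticulations), which is essentially the content of the paper's own terse ``it follows by construction.''
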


Assume that $(t', u')$ is an arc of $\cN$. Let $\cT\in T(\cN)$. If $\cT$ uses $e_1$, then $A\cup B$ is a cluster of $\cT$, while if $\cT$ uses $\{e_2, f_u\}$, then $\cT|\{a, b, m_u\}\cong (b, m_u, a)$. So
$$|T(\cN)| \ge |T(\cN\ba e_2)| + |T(\cN\ba \{e_1, f'_u\}|.$$
Since $\cN\ba \{e_2, f'_u, g'_t\}$ has no $3$-cycles by Lemma~\ref{no3cycles}, it follows by induction that
$$|T(\cN)| \ge t(n, k-1) + t(n, k-3).$$
If $k$ is even, then, by Lemma~\ref{tnk}(vi), we have $|T(\cN)| > t(n, k)$, a contradiction. So say $k\geq 5$ and odd. Then, as $|T(\cN)|\le t(n, k)$,
$$|T(\cN)| = t(n, k-1) + t(n, k-3) = t(n, k).$$
We now show that $\cN$ is an octopus. Since $\{t', u', t, q_A, u, p_B, w, v, p_A\}$ induces the core of a $3$-tight caterpillar ladder of $\cN$, it follows by Lemma~\ref{tight} that $g'_t$ is non-essential. Thus
$$|T(\cN)| = |T(\cN\ba g'_t)|.$$
In turn, it is easily checked that 
$$|T(\cN)| = 3\cdot |T(\cN\ba \{g'_t, f'_u, e_2\}|.$$
Therefore, by induction,
$$t(n, k-1) + t(n, k-3) = |T(\cN)| = 3\cdot |T(\cN\ba \{g'_t, f'_u, e_2\})|\ge 3\cdot t(n, k-3),$$
that is, as $k \geq 5$, and thus, by Lemma~\ref{tnk}(v), $t(n, k-1) = 2\cdot t(n, k-3)$, we have
$$|T(\cN\ba \{g'_t, f'_u, e_2\})| = t(n, k-3).$$
Hence, by induction, $\cN\ba \{g'_t, f'_u, e_2\}$ is an octopus with an even number of reticulations. It follows by construction that $\cN$ is an octopus, completing the proof of (\ref{oct2}).

Thus we may now assume that $(t', u')$ is not an arc. Let $\cT\in T(\cN)$. If $\cT$ uses $e_1$, then $A\cup B$ is cluster of $\cT$, while if $\cT$ uses $\{e_2, f_u\}$, then $A\cup B$ is not a cluster of $\cT$. Thus
$$|T(\cN)|\ge |T(\cN\ba e_2)| + |T(\cN\ba \{e_1, f'_u\})|.$$
Since $(t', u')$ is not an arc, $\cN\ba \{e_1, f'_u\}$ has no $3$-cycles and so, by induction and Lemma~\ref{tnk}(v),
$$|T(\cN)|\ge t(n, k-1) + t(n, k-2) > t(n, k).$$
This last contradiction completes the proof of the theorem.
\end{proof}

\section*{Acknowledgments} The authors were supported by the New Zealand Marsden Fund.

\bibliographystyle{abbrvnat}
\bibliography{References.bib}

\end{document}